\begin{document}
\author {Miros\l aw Sobolewski}
 \address {University of Warsaw,\newline Faculty of Mathematics,Informatics and Mechanics, \newline Instytut Matematyki, \newline Banacha 2, Warszawa  02-097,Poland}
 \email {msobol@mimuw.edu.pl}
\begin {abstract}Klee introduced   \emph{the proximate fixed point property} for compacta   which is stronger than fixed point property.
We consider relations between proximate fixed point property of spaces being result of application of different operations to continua. As  an application we show this property for products, cones, suspensions and joins of span 0 continua.\end{abstract}
\title{Proximate fixed point property and operations}
\maketitle
 \maketitle
\newcommand*{\e}{\k e}
\newcommand*{\as}{\k a}
\newtheorem{tw}{Theorem}
\newtheorem{pr}{Proposition}
\newtheorem{st}{Claim}
\newtheorem{df}{Definition}
\newtheorem{lem}{Lemma}
\newcommand*{\NWD}{\operatorname*{NWD}}
\newcommand*{\diam}{\operatorname*{diam}}
\newcommand*{\dist}{\operatorname*{dist}}
\newcommand* \wn{\operatorname*{int}}
\newcommand* \cl{\operatorname*{cl}}
\newcommand* \bd{\operatorname*{bd}}
\section{Introductory  lemmas}
All spaces are considered to be subspaces of an AR compactum $Q$. The closed $\varepsilon$-neighborhood of $X$, ie., $\{x\in Q:\rho (x,X)\leq \varepsilon\}$ of a set $X$ will be denoted by $X^\varepsilon$, it is assumed to be an ANR. Notation $x\stackrel{\varepsilon} {=}y$ means that $\rho(x,y)\leq \varepsilon$
Let $X, Y$ be compacta, and $\varepsilon>0$ . We say that a  function $f:X\to Y$ is    $\varepsilon$- \emph{continuous}  if there exist a number $\delta>0$ such that if $\rho (x,x')<\delta$ then $\rho(f(x),f(x') <\varepsilon$. Victor Klee in \cite{Klee}  introduced the following
\begin{df} A compactum $X$ has the \emph{proximate fixed point property } (shortly pfpp) if for every $\varepsilon>0$ there exists a number $\delta>0$ such that
if $f:X\to X$ is $\delta$-continuous then there exists a point $x\in X$ such that $x\stackrel {\varepsilon}{=} f(x)$\end{df}. The pfpp is a topological property and if a compactum has the pfpp then it has the fixed point property (Klee).
\begin{figure} \begin{centering}
\includegraphics[angle=0, width=4in ]{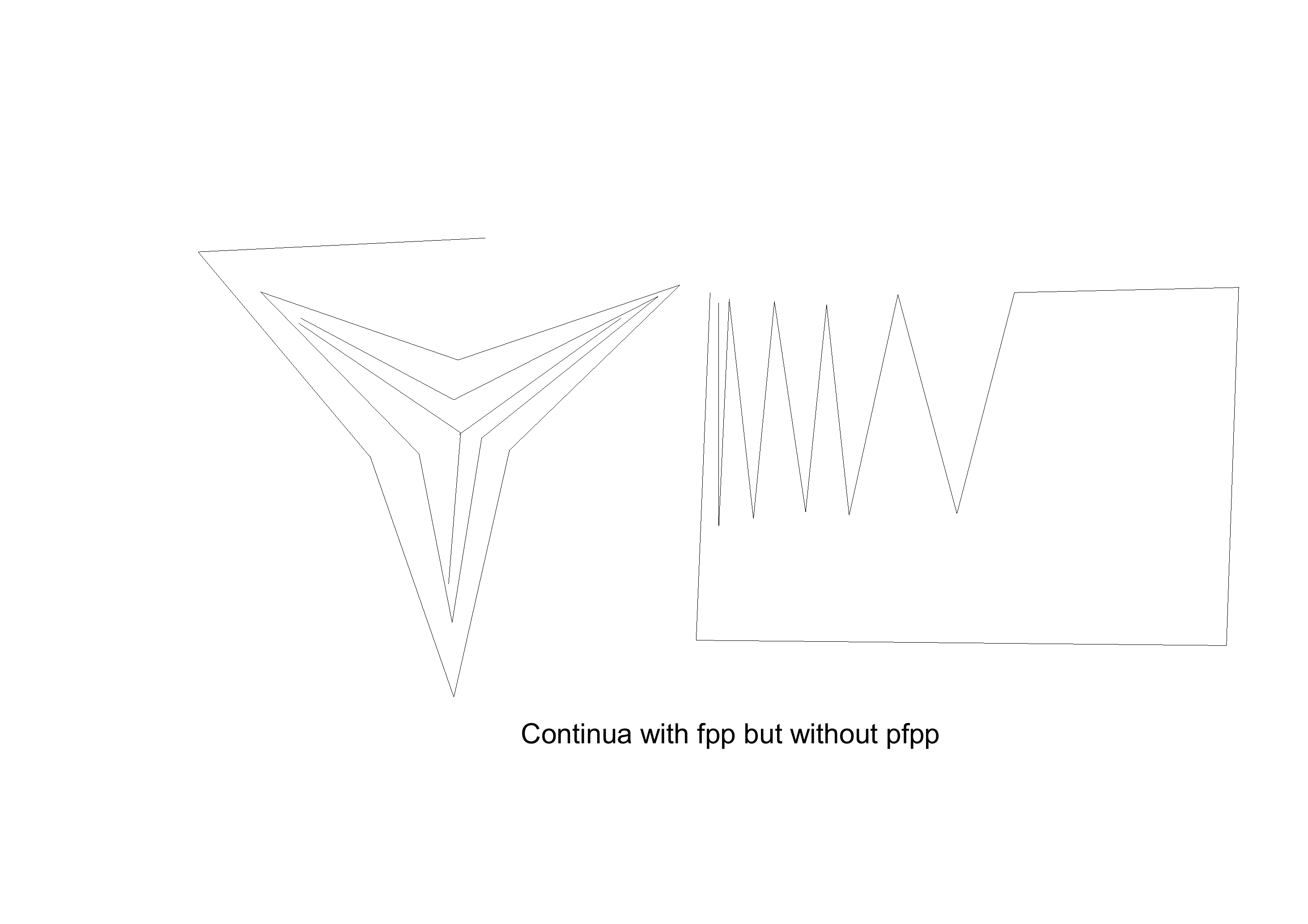}
\caption{.}\label{przykl}\end{centering}
\end{figure}
In  fig.1 are presented examples of continua with the fixed point property but without the pfpp -- a warsaw circle nad a spiral winding up a triod. If an ANR has the fixed point property then it has the pfpp as well (Klee).
Jose Sanjurjo showed in \cite{Sanj} the following \begin{tw}[Sanjurjo] A compactum $X$ has the proximate fixed point property $\Leftrightarrow$ for every $\varepsilon>0$ there exists $\delta>0$ such that if $f$ is a mapping from $X$ to $X^\delta$ then $x\stackrel{\varepsilon}=f(x)$ for a point $x\in X$.  \end{tw}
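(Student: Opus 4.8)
The plan is to prove the two implications separately, in each case by passing directly between $\delta$-continuous self-maps of $X$ and genuine maps of $X$ into its neighbourhood $X^\delta$; the equivalence amounts to the assertion that these two notions of ``approximate self-map'' can be traded for one another up to arbitrarily small error.

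For the implication ``$\Rightarrow$'': let $\varepsilon>0$ be given. Using the pfpp, choose $\delta_0>0$ so that every $\delta_0$-continuous map $g\colon X\to X$ admits a point $x$ with $x\stackrel{\varepsilon/2}{=}g(x)$, and put $\delta=\min(\delta_0/3,\varepsilon/2)$. Let $f\colon X\to X^\delta$ be a mapping; being continuous on the compactum $X$ it is uniformly continuous, so there is $\sigma>0$ with $\rho(f(x),f(x'))<\delta$ whenever $\rho(x,x')<\sigma$. For each $x\in X$ pick a point $g(x)\in X$ with $\rho(f(x),g(x))\le\delta$, which exists since $f(x)\in X^\delta$ and $X$ is compact. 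The map $g\colon X\to X$ so obtained need not be continuous, but for $\rho(x,x')<\sigma$ the triangle inequality yields $\rho(g(x),g(x'))\le\rho(g(x),f(x))+\rho(f(x),f(x'))+\rho(f(x'),g(x'))<3\delta\le\delta_0$, so $g$ is $\delta_0$-continuous. Hence some $x$ satisfies $x\stackrel{\varepsilon/2}{=}g(x)$, and then $\rho(x,f(x))\le\rho(x,g(x))+\rho(g(x),f(x))\le\varepsilon/2+\delta\le\varepsilon$.

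For ``$\Leftarrow$'': given $\varepsilon>0$, apply the hypothesis to the tolerance $\varepsilon/2$ to get $\delta>0$ such that every mapping of $X$ into $X^\delta$ moves some point of $X$ by at most $\varepsilon/2$; I then claim a sufficiently small $\delta'>0$ witnesses the pfpp for $\varepsilon$. Let $h\colon X\to X$ be $\delta'$-continuous and fix $\sigma>0$ with $\rho(h(x),h(x'))<\delta'$ for $\rho(x,x')<\sigma$. Cover $X$ by finitely many open sets $U_1,\dots,U_n$ of diameter $<\sigma$, so $\diam h(U_i)\le\delta'$ for each $i$; choose $x_i\in U_i$ and a partition of unity $\{\phi_i\}$ subordinate to $\{U_i\}$, and set $\tilde h(x)=\sum_i\phi_i(x)\,h(x_i)$, the convex combination being formed in the Hilbert cube (where we may take $Q$ convexly embedded, the property under proof not depending on the choice of metric) and then pushed back onto $Q$ by a retraction, which displaces points lying near $X$ only slightly. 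Whenever $\phi_i(x)>0$ one has $x\in U_i$, hence $h(x_i)$ and $h(x)$ both lie in $h(U_i)$, a set of diameter $\le\delta'$; so $\tilde h(x)$ lies within $\delta'$ of $h(x)$ up to the small retraction error, and in particular $\tilde h(x)\in X^\delta$ once $\delta'$ was taken small enough. Thus $\tilde h\colon X\to X^\delta$ is a genuine mapping, so there is $x\in X$ with $x\stackrel{\varepsilon/2}{=}\tilde h(x)$, and the triangle inequality again gives $\rho(x,h(x))\le\varepsilon$.

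The routine direction is ``$\Rightarrow$''; the content lies in ``$\Leftarrow$'', where the one step needing real care is the standard partition-of-unity approximation of the merely $\delta'$-continuous map $h$ by an honest mapping $\tilde h$ that stays uniformly close to $h$ and still takes values in the prescribed neighbourhood of $X$ — together with the bookkeeping of the several tolerances, including the harmless error introduced by the retraction onto $Q$ when $Q$ itself is not convex.
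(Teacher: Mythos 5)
The paper does not prove this statement at all: it is imported from Sanjurjo's paper \cite{Sanj} and then adopted as the working definition of the pfpp, so there is no internal proof to compare yours with. Your direct two-implication argument is correct and is more elementary and self-contained than Sanjurjo's original route (which goes through universal maps and approximative machinery). The ``$\Rightarrow$'' direction is clean: the nearest-point selection turns a mapping into $X^\delta$ into a $3\delta$-continuous self-map of $X$, and Klee's definition asks for nothing more than this oscillation bound, so the discontinuity of $g$ is harmless. In ``$\Leftarrow$'' the crux is exactly the step you identify -- replacing the $\delta'$-continuous $h$ by a genuine mapping $\tilde h$ uniformly $\delta'$-close to it -- and the partition-of-unity construction does this. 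The one place to write out in full is the parenthetical remark about the metric: you should not actually change the metric on $Q$, since the hypothesis is phrased in terms of $X^\delta$ and would become a moving target. Instead, embed $Q$ topologically in the Hilbert cube, form the convex combinations there in the cube's metric $d$, retract back by a retraction $r$ of the Hilbert cube onto $Q$ (which exists because $Q$ is an AR), and then note two facts: $r$ is uniformly continuous and fixes $Q$ pointwise, so it displaces points $d$-close to $Q$ by an amount controlled by its modulus of continuity; and $d$ and the original metric $\rho$ are uniformly equivalent on the compactum $Q$. Feeding these two moduli into the choice of $\delta'$ closes the bookkeeping, so the argument is complete as intended.
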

We will use this rather than original definition of Klee.
There are some simple but useful counterparts of well known fixed point theorems:
Let us recall, that a mapping $f:X\to Y$ is\emph{universal} if for every $g:X\to Y$ there exists $x\in X$ such that $f(x)=g(x)$ (Holsztyñski).
\begin{pr}\label{anraproks} Let $X$ be a continuum. If for every $\varepsilon>0$ there exists a universal $\varepsilon$-mapping $f:X\to Y$, where  $Y_\varepsilon$ ia an ANR  then $X$ has the pfpp.\end{pr}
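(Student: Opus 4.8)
The plan is to verify the criterion in Sanjurjo's theorem (Theorem~1): given $\varepsilon>0$ we must produce $\delta>0$ so that every mapping $g\colon X\to X^{\delta}$ satisfies $x\stackrel{\varepsilon}{=}g(x)$ for some $x\in X$. So fix $\varepsilon>0$ and invoke the hypothesis to get a universal $\varepsilon$-mapping $f\colon X\to Y$ with $Y=Y_\varepsilon$ an ANR; here "$\varepsilon$-mapping" is taken in the sense that $\diam f^{-1}(y)<\varepsilon$ for every $y\in Y$. Since $X$ is a compact subset of $Q$ and $Y$ is an ANR, $f$ extends to a mapping $\bar f\colon X^{\eta}\to Y$ for some $\eta>0$, with $\bar f|_X=f$.

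The key step is the following claim: there is $\delta$ with $0<\delta\le\eta$ such that whenever $x\in X$, $z\in X^{\delta}$ and $\bar f(z)=\bar f(x)$, then $\rho(x,z)<\varepsilon$. I would prove this by contradiction. If no such $\delta$ worked, then for each $n$ there would be $x_n\in X$ and $z_n\in X^{1/n}$ with $\bar f(x_n)=\bar f(z_n)$ but $\rho(x_n,z_n)\ge\varepsilon$; passing to convergent subsequences $x_n\to x$ and $z_n\to z$, we would get $x,z\in X$ (the latter because $\rho(z_n,X)\le 1/n\to 0$ and $X$ is closed), $\rho(x,z)\ge\varepsilon$, and, by continuity of $\bar f$, $f(x)=\bar f(x)=\bar f(z)=f(z)$. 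Then $x$ and $z$ would lie in the single fibre $f^{-1}(f(x))$, whose diameter is $<\varepsilon$ — contradicting $\rho(x,z)\ge\varepsilon$. I expect this compactness argument, which turns the "pointwise" smallness of fibres into a uniform estimate, to be the main obstacle; the rest is formal.

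Finally, with this $\delta$ in hand, I would take an arbitrary mapping $g\colon X\to X^{\delta}$. Because $\delta\le\eta$, the composition $\bar f\circ g\colon X\to Y$ is defined, and universality of $f$ yields a point $x\in X$ with $f(x)=\bar f\bigl(g(x)\bigr)$, that is $\bar f(x)=\bar f\bigl(g(x)\bigr)$ with $x\in X$ and $g(x)\in X^{\delta}$. By the claim $\rho\bigl(x,g(x)\bigr)<\varepsilon$, so $x\stackrel{\varepsilon}{=}g(x)$. As $\varepsilon>0$ was arbitrary, Sanjurjo's criterion is met and $X$ has the pfpp. Note that connectedness of $X$ plays no role here beyond whatever the notion of universal $\varepsilon$-mapping already presupposes.
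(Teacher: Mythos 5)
Your proof is correct and follows essentially the same route as the paper: extend the universal $\varepsilon$-mapping over a closed neighborhood $X^\delta$ using the ANR property of $Y$, apply universality to the composition $\bar f\circ g$, and conclude from the smallness of the fibres. Your compactness argument making the fibre bound uniform near $X$ is exactly the detail the paper's one-line extension step leaves implicit, so you have in fact supplied the justification the paper omits.
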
\begin{proof} Let $f$ be a universal  $\varepsilon/2$-mapping of $X$ onto $Y$ -- an ANR compactum $Y$. For some $\delta>0$ we can extend it to an $\varepsilon/2$-mapping $g:X^\delta\to Y$. Now let $h:X\to X^\delta$. Because $f$ is universal there exists a point $x\in X$ such that $f(x)=gh(x)$ and hence $f(x)\stackrel{\varepsilon}{=} x$.\end{proof}
A consequence of Proposition (\ref{anraproks}) is:
\begin{pr} Each ANR having fpp has pfpp as well.\end{pr}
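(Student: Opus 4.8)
The plan is to deduce the statement directly from Proposition~\ref{anraproks}, using the identity map as the required universal $\varepsilon$-mapping. So let $X$ be an ANR with the fixed point property; I must produce, for every $\varepsilon>0$, a universal $\varepsilon$-mapping of $X$ onto a space whose $\varepsilon$-neighborhood is an ANR.

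First I would note that the hypothesis is applicable at all, i.e.\ that $X$ is a continuum. If a compactum had the fixed point property but were disconnected, say $X=A\sqcup B$ with $A,B$ nonempty and clopen, then the map carrying all of $A$ to a point of $B$ and all of $B$ to a point of $A$ would be fixed-point-free; hence the fpp forces connectedness, and a connected compact ANR is a (Peano) continuum. Thus Proposition~\ref{anraproks} does apply to $X$.

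The main step — in fact essentially the whole argument — is the observation that $\mathrm{id}_X\colon X\to X$ does the job for every $\varepsilon>0$. It is trivially an $\varepsilon$-mapping (being a homeomorphism), its target is $X$ itself, which is an ANR by hypothesis (so the neighborhood $X^\varepsilon$ is an ANR as assumed throughout), and it is \emph{universal} precisely because $X$ has the fixed point property: for any $g\colon X\to X$, a point $x$ with $\mathrm{id}_X(x)=g(x)$ is exactly a fixed point of $g$, which exists by assumption. Therefore for every $\varepsilon>0$ there is a universal $\varepsilon$-mapping of $X$ onto an ANR, and Proposition~\ref{anraproks} gives the pfpp.

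I do not expect any genuine obstacle; the one point to state carefully is the reduction ``ANR with fpp $\Rightarrow$ continuum,'' which is needed only so that the earlier proposition literally applies, and which follows from the elementary remark above. One could alternatively bypass this by reproving the short extension-and-universality argument of Proposition~\ref{anraproks} verbatim with $f=\mathrm{id}_X$, but invoking the proposition is cleaner.
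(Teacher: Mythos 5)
Your proposal is correct and is exactly the argument the paper intends (it states the result as an immediate consequence of Proposition~\ref{anraproks} without writing it out): the identity map is universal precisely when $X$ has the fixed point property, and it trivially satisfies the $\varepsilon$-mapping condition for every $\varepsilon$. The connectedness remark is a reasonable extra care, and the deduction goes through.
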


\begin{pr}[Klee] Every retract $Y$ of a continuuum $X$ with the pfpp has the pfpp as well.\end{pr}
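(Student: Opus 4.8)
The plan is to use Sanjurjo's characterization (the Theorem quoted above) for \emph{both} $X$ and $Y$, together with two elementary facts about a retraction $r\colon X\to Y$: it is uniformly continuous on the compactum $X$, and it is the identity on $Y$. Fix $\varepsilon>0$; we must produce $\delta>0$ so that every mapping $f\colon Y\to Y^\delta$ has a point $y\in Y$ with $y\stackrel{\varepsilon}{=}f(y)$.

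First I would use uniform continuity of $r$ to choose $\eta>0$, with $\eta<\varepsilon/3$, such that $\rho(a,b)\le\eta$ implies $\rho(r(a),r(b))\le\varepsilon/3$. Then I apply the pfpp of $X$ to the number $\eta/2$, obtaining $\delta_X>0$ such that every mapping $g\colon X\to X^{\delta_X}$ has a point $x$ with $x\stackrel{\eta/2}{=}g(x)$, and I set $\delta=\min(\delta_X,\eta/2)$. Now, given $f\colon Y\to Y^\delta$, the composition $g=f\circ r$ is a mapping of $X$ into $Y^\delta$, and since $Y\subseteq X$ we have $Y^\delta\subseteq X^\delta\subseteq X^{\delta_X}$; hence $g\colon X\to X^{\delta_X}$ and there is $x\in X$ with $\rho(x,f(r(x)))\le\eta/2$. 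Put $y=r(x)\in Y$.

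The main obstacle is that this $x$ need not lie in $Y$, so a priori $y=r(x)$ is unrelated to $x$ and one cannot conclude $y\stackrel{\varepsilon}{=}f(y)$ directly; the whole point of the above choices is to control $\rho(x,y)$. This is resolved as follows: since $f(y)\in Y^\delta$ we get $\rho(x,Y)\le\rho(x,f(y))+\rho(f(y),Y)\le\eta/2+\delta\le\eta$, so there is $y'\in Y$ with $\rho(x,y')\le\eta$; as $r$ fixes $y'$, the choice of $\eta$ gives $\rho(y,y')=\rho(r(x),r(y'))\le\varepsilon/3$. A triangle estimate $\rho(y,f(y))\le\rho(y,y')+\rho(y',x)+\rho(x,f(y))\le\varepsilon/3+\eta+\eta/2<\varepsilon$ then shows $y\stackrel{\varepsilon}{=}f(y)$, which is what we needed. (Every map occurring here is continuous — a retraction is continuous and $f$ is a mapping — so Sanjurjo's theorem legitimately applies at both steps.)
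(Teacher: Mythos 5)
Your proof is correct. The paper states this proposition without proof (it is attributed to Klee), so there is no argument of the author's to compare against; your route --- compose $f$ with the retraction $r$, apply the pfpp of $X$ at the scale $\eta/2$, and then use uniform continuity of $r$ together with the fact that $r$ fixes a point $y'\in Y$ lying within $\eta$ of $x$ to transfer the almost-fixed point from $x$ to $y=r(x)$ --- is the standard one, and every estimate checks out, including the step $\rho(x,Y)\le\rho(x,f(y))+\rho(f(y),Y)\le\eta/2+\delta\le\eta$ that justifies the existence of $y'$. One small remark: had you worked with Klee's original definition via $\delta$-continuous self-maps of $Y$, the composition $f\circ r$ would map $X$ into $Y$ itself, so $x$ would already be close to the point $f(r(x))\in Y$ and the auxiliary point $y'$ would be unnecessary; the extra step in your argument is precisely the price of using Sanjurjo's reformulation, where $f(y)$ is only known to lie in $Y^\delta$, and you pay it correctly.
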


We call a mapping $f:X\to Y$ an $\varepsilon$-\emph{shift} if $x\stackrel{\varepsilon}{=}f(x)$ for every $x\in X$
\begin{pr}\label{slaby} If for every number $\varepsilon>0$ there exists an $\varepsilon$-shift of a continuum $X$ into its subcontinuum $X_\varepsilon$ with pfpp then $X$ has pfpp.\end{pr}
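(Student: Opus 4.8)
The plan is to verify Sanjurjo's characterization directly for $X$. Fix $\varepsilon>0$. By hypothesis there is an $\eta$-shift $s\colon X\to X_\eta$ onto a subcontinuum $X_\eta\subseteq X$ with the pfpp, and I take $\eta<\varepsilon/2$, so $\rho(x,s(x))\le\eta<\varepsilon/2$ for every $x\in X$. Applying the ($\Rightarrow$) direction of Sanjurjo's theorem to $X_\eta$ with precision $\varepsilon/2$, I obtain $\delta_1>0$ such that every mapping $g\colon X_\eta\to X_\eta^{\delta_1}$ satisfies $z\stackrel{\varepsilon/2}{=}g(z)$ for some $z\in X_\eta$.

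Next I would extend the shift past $X$. Since $X$ is compact (hence closed in $Q$) and $X_\eta^{\delta_1}$ is an ANR, the composite $s\colon X\to X_\eta\subseteq X_\eta^{\delta_1}$ extends to a mapping $\tilde s\colon U\to X_\eta^{\delta_1}$ on some open neighborhood $U\supseteq X$ in $Q$. The function $y\mapsto\rho(y,\tilde s(y))$ is continuous and is $<\varepsilon/2$ on $X$, so the open set on which it is $<\varepsilon/2$ contains $X$, and therefore contains $X^{\delta_2}$ for some $\delta_2>0$, which may also be taken small enough that $X^{\delta_2}\subseteq U$. Put $\delta=\min(\delta_1,\delta_2)$.

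Now let $f\colon X\to X^\delta$ be an arbitrary mapping. Because $X_\eta\subseteq X$ and $f(X_\eta)\subseteq X^\delta\subseteq X^{\delta_2}\subseteq U$, the composite $g:=\tilde s\circ(f|_{X_\eta})$ is a well-defined mapping $X_\eta\to X_\eta^{\delta_1}$, so by the choice of $\delta_1$ there is a point $z\in X_\eta\subseteq X$ with $z\stackrel{\varepsilon/2}{=}g(z)$. Since $f(z)\in X^\delta\subseteq X^{\delta_2}$, the estimate above gives $\rho\bigl(f(z),\tilde s(f(z))\bigr)<\varepsilon/2$, that is $g(z)\stackrel{\varepsilon/2}{=}f(z)$, and the triangle inequality yields $z\stackrel{\varepsilon}{=}f(z)$. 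As $\varepsilon>0$ was arbitrary, Sanjurjo's theorem shows that $X$ has the pfpp.

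The substance of the argument is the idea of transporting an approximate fixed point problem on $X$ to the subcontinuum $X_\eta$ along the shift and back; the rest is bookkeeping with constants. The one step that needs care is the second paragraph: the extension $\tilde s$ — and with it the neighborhood $U$ and the radius $\delta_2$ — must be produced \emph{after} $\delta_1$ has been fixed, to avoid circularity, and it is essential to extend into the ANR $X_\eta^{\delta_1}$ rather than into $X_\eta$ itself, which need not be an ANR.
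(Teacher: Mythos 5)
Your argument is correct and is essentially the paper's own: the paper states this proposition without a separate proof and instead proves the stronger Proposition~\ref{mocny}, whose argument is exactly your scheme of extending the shift over a neighborhood $X^{\delta_2}$ into the ANR $X_\eta^{\delta_1}$, composing with $f$, restricting to the subcontinuum, and invoking Sanjurjo's characterization there. Your bookkeeping with $\varepsilon/2$ (controlling the displacement of $\tilde s$ on a neighborhood by continuity rather than conceding a factor of $2$ on extension) is slightly tighter than the paper's $\varepsilon/3$ split, but the idea is the same.
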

For us however, there will be useful a strengthening:
\begin{pr}\label{mocny} Let $X$ be a continuum .If for every number $\varepsilon>0$ there exists a subcontinuum $X_\varepsilon$ with pfpp such that for every number $\xi>0$ there exists an $\varepsilon$-shift $f_\varepsilon^\xi : X\to X_\varepsilon^\xi$   then $X$ has pfpp.\end{pr}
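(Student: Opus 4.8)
The plan is to verify the Sanjurjo criterion for $X$: given $\varepsilon>0$ I must exhibit $\delta>0$ such that every mapping $h:X\to X^{\delta}$ has a point with $y\stackrel{\varepsilon}{=}h(y)$. First I would fix $\varepsilon>0$ and take $K:=X_{\varepsilon/3}$, the subcontinuum supplied by the hypothesis. Since $K$ has the pfpp, Sanjurjo's theorem applied to $K$ furnishes a number $\eta>0$ such that every mapping $g:K\to K^{\eta}$ satisfies $y\stackrel{\varepsilon/3}{=}g(y)$ for some $y\in K$. Using the hypothesis once more with $\xi:=\eta$, I obtain an $(\varepsilon/3)$-shift $f:=f_{\varepsilon/3}^{\eta}:X\to K^{\eta}$.

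Next I would push $f$ onto a neighbourhood of $X$. Since $K^{\eta}$ is an ANR and $X$ is closed in $Q$, the mapping $f$ extends to a mapping $\bar f:X^{\delta_{1}}\to K^{\eta}$ for some $\delta_{1}>0$. Because $\bar f$ agrees with the $(\varepsilon/3)$-shift $f$ on $X$, the continuous function $z\mapsto\rho(z,\bar f(z))$ is at most $\varepsilon/3$ on $X$; by compactness of $X$, after shrinking $\delta_{1}$ I may assume it is at most $\varepsilon/2$ on all of $X^{\delta_{1}}$. I then set $\delta:=\delta_{1}$.

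Finally, for an arbitrary mapping $h:X\to X^{\delta}$ and the inclusion $i:K\hookrightarrow X$, the composition $g:=\bar f\circ h\circ i:K\to K^{\eta}$ makes sense because $h(K)\subseteq X^{\delta}=X^{\delta_{1}}$ lies in the domain of $\bar f$. By the choice of $\eta$ there is $y\in K$ with $y\stackrel{\varepsilon/3}{=}\bar f(h(y))$; since $h(y)\in X^{\delta}$ we also have $h(y)\stackrel{\varepsilon/2}{=}\bar f(h(y))$, so $y\stackrel{5\varepsilon/6}{=}h(y)$ and in particular $y\stackrel{\varepsilon}{=}h(y)$ with $y\in X$. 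As $\varepsilon$ was arbitrary, Sanjurjo's theorem gives that $X$ has the pfpp.

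The step I expect to require the most care is the extension of the shift: one must be certain that $f:X\to K^{\eta}$ extends to a mapping defined on a whole closed neighbourhood $X^{\delta}$ of $X$ and that this extension can still be taken to move points by less than $\varepsilon$ -- the first is the standard ANR extension property combined with compactness of $X$, the second follows from continuity of $\bar f$. Beyond that, the only thing to watch is that the three radii -- the error $\varepsilon/3$ tolerated by the pfpp of $K$, the radius $\eta=\xi$ of the target neighbourhood of $K$, and the radius $\delta$ of the domain neighbourhood of $X$ -- are chosen in a consistent order; this is automatic since each may be shrunk as needed. The rest is the triangle inequality.
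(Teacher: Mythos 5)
Your proof is correct and follows essentially the same route as the paper: fix $\lambda=\varepsilon/3$, apply the Sanjurjo criterion to $X_\lambda$ to get $\eta$, extend the $\lambda$-shift $X\to X_\lambda^{\eta}$ over a closed neighbourhood $X^{\delta}$ using the ANR property while controlling the displacement by compactness, and compose with an arbitrary $h:X\to X^{\delta}$ restricted to $X_\lambda$. The only differences are bookkeeping of constants ($\varepsilon/3+\varepsilon/2$ versus the paper's $\lambda+2\lambda$), which do not affect the argument.
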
\begin{proof}Let $\varepsilon>0$ and let $\lambda=\varepsilon/3$. Choose $\eta>0$ such that for every $\phi: X_\lambda\to X_\lambda^\eta$ there exists a point $x_0\in X_\lambda$ such that $x_0\stackrel{\lambda}{=}\phi(x_0)$. Consider a $\lambda$-shift $f:X\to X_\lambda^{\eta/2}$. It can be extended to a $2\lambda$-shift $\tilde f:X^\delta \to X_\lambda ^\eta$ for a number $\delta>0$. Now, let $g:X\to X^\delta$. The composition $\tilde f\circ g$ maps $X_\lambda$ into $X_\lambda^\eta$, hence we have a point $x_0\in X_\lambda\subset X$ such that $x_0\stackrel{\lambda}{=}\tilde f\circ g(x_0)\stackrel {2\lambda}{=}g(x_0)$. Thus $g(x_0)\stackrel{\varepsilon}{=}x_0$.\end{proof}
Sometimes is useful an outer approximation. One can easily show the following:
\begin{pr} Let $X$ be a continuum. If for every number $\varepsilon>0$ there exists an $\varepsilon$-shift $f:X_\varepsilon\to X$, where $X_\varepsilon$ is a continuum with pfpp such that $X\subset X_\varepsilon$ then the continuum $X$ has pfpp.\end{pr}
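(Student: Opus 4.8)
The plan is to apply Sanjurjo's characterization of the pfpp and to imitate the argument of Proposition~\ref{mocny}, with one simplification: since now the approximating continuum lies \emph{outside} $X$, the shift can be composed on the domain side of a test map instead of being extended.

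First I would fix $\varepsilon>0$ and set, say, $\lambda=\varepsilon/3$. By hypothesis there is a continuum $X_\lambda$ with the pfpp such that $X\subset X_\lambda$, together with a $\lambda$-shift $f\colon X_\lambda\to X$. Since $X_\lambda$ has the pfpp, Sanjurjo's theorem yields a number $\eta>0$ such that every mapping $\phi\colon X_\lambda\to X_\lambda^{\eta}$ satisfies $x_0\stackrel{\lambda}{=}\phi(x_0)$ for some $x_0\in X_\lambda$.

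Next I would put $\delta=\eta$ and take an arbitrary mapping $g\colon X\to X^{\delta}$. Because $X\subset X_\lambda$ one has $X^{\eta}\subset X_\lambda^{\eta}$ (the distance to the larger set is no larger), so $g\circ f\colon X_\lambda\to X^{\eta}\subset X_\lambda^{\eta}$ is a legitimate test map for $X_\lambda$; it is continuous since $f$ and $g$ are mappings. By the choice of $\eta$ there is a point $x_0\in X_\lambda$ with $x_0\stackrel{\lambda}{=}g(f(x_0))$. Setting $y_0=f(x_0)\in X$ and using that $f$ is a $\lambda$-shift, $y_0\stackrel{\lambda}{=}x_0\stackrel{\lambda}{=}g(y_0)$, hence $g(y_0)\stackrel{2\lambda}{=}y_0$, i.e. $g(y_0)\stackrel{\varepsilon}{=}y_0$. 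Since $\varepsilon>0$ was arbitrary, Sanjurjo's theorem gives that $X$ has the pfpp.

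I do not expect a genuine obstacle here. The only point to get right is the direction of composition — one precomposes $f$ (so the domain of the test map becomes $X_\lambda$, where the fixed-point-up-to-$\lambda$ property is available) rather than trying to extend $g$ off $X$ — together with the triangle-inequality bookkeeping $\lambda+\lambda<\varepsilon$ and the elementary inclusion $X^{\eta}\subset X_\lambda^{\eta}$. No ANR hypothesis on $X_\lambda^{\eta}$ is needed, as Sanjurjo's criterion already bakes that in.
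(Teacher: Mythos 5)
Your argument is correct and is precisely the ``easy'' argument the paper omits: precomposing a test map $g\colon X\to X^{\delta}$ with the outer shift $f\colon X_\lambda\to X$ turns it into a test map $X_\lambda\to X_\lambda^{\eta}$, and the triangle inequality finishes. The bookkeeping ($X^{\eta}\subset X_\lambda^{\eta}$ because $X\subset X_\lambda$, and $2\lambda<\varepsilon$) is all in order, and you correctly identify why this case is simpler than Proposition~\ref{mocny}: no extension of $g$ off $X$ is needed.
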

A countable product of compacta has small projections onto finite subproducts, hence from Proposition \ref{slaby} we have:
\begin{pr}\label{infprod}If for a family of continua $X_i$, $i=1,2,\dots$ each cartesian product of finite subfamily has pfpp then the product $\prod_{i=1}^\infty X_i$ has pfpp as well.\end{pr}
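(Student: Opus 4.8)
The plan is to exhibit, for each $\varepsilon>0$, an $\varepsilon$-shift of $X=\prod_{i=1}^\infty X_i$ onto a subcontinuum homeomorphic to a finite subproduct, and then to quote Proposition~\ref{slaby}.

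First I would equip $X$ with a compatible product metric, say
\[
\rho\big((x_i)_i,(y_i)_i\big)=\sum_{i=1}^\infty 2^{-i}\,\frac{d_i(x_i,y_i)}{1+d_i(x_i,y_i)},
\]
where $d_i$ is a metric on $X_i$; this induces the product topology, and its crucial feature is that the tail contributes little, $\sum_{i>n}2^{-i}\to 0$ as $n\to\infty$. I would also fix once and for all a base point $a=(a_i)_i\in X$.

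Next, given $\varepsilon>0$, I pick $n$ with $\sum_{i>n}2^{-i}<\varepsilon$ and set $X_\varepsilon=\prod_{i\le n}X_i\times\prod_{i>n}\{a_i\}$. This is a subcontinuum of $X$ (a product of continua, hence nonempty, compact and connected) and it is homeomorphic to the finite product $\prod_{i=1}^n X_i$, which has the pfpp by hypothesis. The truncation $r_n:X\to X_\varepsilon$ defined by $r_n\big((x_i)_i\big)=(x_1,\dots,x_n,a_{n+1},a_{n+2},\dots)$ is continuous and satisfies $\rho\big(x,r_n(x)\big)\le\sum_{i>n}2^{-i}<\varepsilon$ for every $x\in X$, so it is an $\varepsilon$-shift of $X$ into $X_\varepsilon$.

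Finally, since $\varepsilon>0$ was arbitrary, Proposition~\ref{slaby} applies and yields the pfpp for $X$. I expect the only point requiring a little care to be the metric estimate, that is, the verification that the finite-dimensional truncations $r_n$ are uniformly $\varepsilon$-close to the identity in a metric compatible with the product topology, so that finite subproducts genuinely approximate the whole product; for the standard product metric above this is routine, and no real obstacle arises.
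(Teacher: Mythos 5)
Your proof is correct and is essentially the paper's own argument: the paper simply observes that a countable product has small projections onto finite subproducts and invokes Proposition~\ref{slaby}, which is exactly your truncation map $r_n$ onto the slice $\prod_{i\le n}X_i\times\prod_{i>n}\{a_i\}$. You have merely filled in the routine metric estimate that the paper leaves implicit.
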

Let us recall that a continuum $X$ has \emph{trivial shape} if inclusion mapping $\iota :X\to X^\varepsilon$ is homotopic to a mapping into a point for every number $\varepsilon>0$ \cite{Bo}).
\begin{pr} \label{cstrsh}Denote $\mathbb I=[-1,1]$. Let    $X$ be a continuum of trivial shape. If the cylinder $X\times \mathbb I$ has pfpp then the cone $C(X)$ and the suspension $S(X)$ have pfpp as well.\end{pr}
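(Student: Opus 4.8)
The plan is to derive both statements from Proposition~\ref{mocny}, approximating $C(X)$ and $S(X)$ from inside by cylinders: a cylinder $X\times\mathbb I$ has pfpp by hypothesis, and the only obstruction to ``pushing'' the cone (resp.\ suspension) onto such a cylinder --- the behaviour at the vertex --- disappears because $X$ has trivial shape.

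Write $C(X)=(X\times[0,1])/(X\times\{0\})$, denote by $[x,t]$ the class of $(x,t)$ and by $v$ the vertex, and use a cone metric on $C(X)$. For $0<a<1$ the truncated cone $C_a=\{[x,t]:a\le t\le 1\}$ is homeomorphic to $X\times[a,1]\cong X\times\mathbb I$, hence a continuum with pfpp, while the caps $K_a=\{[x,t]:0\le t\le a\}$ shrink to $v$ as $a\to 0$. Given $\varepsilon>0$, fix $a$ so small that the cap $\{[y,s]:s\le 2a\}$, computed in the cone over any sufficiently small enlargement of $X$, has diameter $<\varepsilon$ (its diameter tends to $0$ with $a$), and set $X_\varepsilon=C_a$. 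It remains, for each $\xi>0$, to produce an $\varepsilon$-shift $f\colon C(X)\to C_a^{\xi}$; Proposition~\ref{mocny} then yields the pfpp of $C(X)$.

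To build $f$, use the trivial shape of $X$: choose $\delta\in(0,\xi)$ small and a homotopy $H\colon X\times[0,1]\to X^{\delta}$ with $H_0$ the inclusion and $H_1\equiv x_0$ for some $x_0\in X$; we may assume $C(X^\delta)$ is embedded in $Q$ extending the embedding of $C(X)$ (enlarging $Q$ if necessary). Put $f([x,t])=[x,t]$ for $t\ge a$ and $f([x,t])=[\,H(x,1-t/a),a\,]$ for $t\le a$, the latter point taken in $C(X^\delta)$. The two formulas agree when $t=a$, so $f$ is continuous; at $t=0$ the second gives the single point $[x_0,a]$, so $f$ is well defined at $v$ and, by uniform continuity of $H$, continuous there. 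Since $H(x,1-t/a)$ lies within $\delta$ of $X$, the point $[H(x,1-t/a),a]$ lies within $\xi$ of $C_a$ in the metric of $Q$ once $\delta$ is small enough, so $f(C(X))\subset C_a^{\xi}$. Finally $f$ is an $\varepsilon$-shift: it is the identity on $\{t\ge a\}$, while $\{[x,t]:t\le a\}$ and its image both lie in the cap $\{[y,s]\in C(X^\delta):s\le 2a\}$, of diameter $<\varepsilon$ by the choice of $a$.

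For the suspension $S(X)=(X\times[-1,1])/(X\times\{-1\},X\times\{1\})$ the argument is the same, with the central cylinder $\{[x,t]:-1+a\le t\le 1-a\}\cong X\times\mathbb I$ in place of $C_a$ and $H$ used to collapse the top cap downward and the bottom cap upward near the two vertices. The main obstacle is the construction of $f$ near the vertex (vertices): one must simultaneously make it single-valued there, continuous, and still land inside the prescribed neighbourhood of the cylinder, and it is precisely here that trivial shape is used, through $H$; the remaining bookkeeping --- notably the comparison of the cone metric with the metric of $Q$ --- is routine.
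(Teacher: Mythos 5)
Your proof is correct and follows essentially the same route as the paper: both decompose $C(X)$ into a truncated cone homeomorphic to $X\times\mathbb I$ plus a small cap at the vertex, use the trivial shape of $X$ (via a null-homotopy into $X^\delta$) to push that cap into an arbitrarily close neighbourhood of the truncated cone while moving points less than $\varepsilon$, and then invoke Proposition~\ref{mocny}. The only differences are notational (vertex at $t=0$ rather than $t=1$) and that you spell out the metric estimates that the paper leaves implicit.
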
\begin{proof}The cone $C(X)$ is the quotient space $X\times [0,1]/X\times \{1\}$. The continuum $X$ is
of trivial shape, hence for every number $\xi>0$ the embedding $\iota:X\times [-1,1]\to X^\xi$ extends to a mapping $\tilde \iota:C(X)\to X^\xi$. Replacing the whole cone $C(X)$ with its $\varepsilon/2$-top $X\times[1-\varepsilon/2,1]/X\times \{1\}$ we obtain an $\varepsilon$-shift of the $\varepsilon/2$-top into arbitrarily close neighborhood, being identity on the bottom of the $\varepsilon/2$-top. We can extend it on the whole $C(X)$ defining the extension
to be identity on the truncated cone corresponding to $X\times [0,1-\varepsilon]$. The truncated cone is homeomorphic to the cylinder $X\times\mathbb I$. Hence we have for every $\varepsilon>0$ an $\varepsilon$-shift into arbitrarily close neighborhood of a continuum with pfpp in $C(X)$. That means by Proposition \ref{mocny} that $C(X)$ has pfpp. A proof for $S(X)$ goes quite alike.\end{proof}
Recall that \emph{join} of spaces $X$ and $Y$, denoted  $A\star B$, is the quotient space of the product $X\times Y\times [0,1]$ divided by relation generated by
    $(x, y_1, 0) \sim (x, y_2, 0) \quad\mbox{for all } x \in X\mbox{ and } y_1,y_2 \in Y,
    (x_1, y, 1) \sim (x_2, y, 1) \quad\mbox{for all } x_1,x_2 \in X \mbox{ and }y \in Y. $
\begin{pr}\label{jointrsh}Let $X$  be a continuum of trivial shape. If the product of continua $X\times Y\times \mathbb I$ has pfpp then the product $C(X)\times Y$ has pfpp.
Moreover if $Y$ is of trivial shape too then  the join  $X\star Y$ has pfpp as well.\end{pr}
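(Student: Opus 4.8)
I would deduce both claims from Proposition~\ref{mocny}: for each $\varepsilon>0$ I will exhibit a subcontinuum with the pfpp into every neighbourhood of which the whole space admits an $\varepsilon$-shift. Since the pfpp is topological, I may work with the usual geometric realisations of $C(X)\times Y$ and $X\star Y$ inside $Q$, in which the slice of a cone at parameter $s$ degenerates linearly (in the factor being collapsed) to the apex as $s\to1$. A thin collar near the apex is then not of small diameter, but the trivial shape hypothesis lets one map it, with small displacement, onto a neighbourhood of its far end: if the collapsed factor $W$ has trivial shape then $W\hookrightarrow W^\xi$ is null-homotopic, hence ($W^\xi$ being an ANR) the inclusion of the base of the thin cone $C_\delta(W)$ into its $\xi$-neighbourhood extends over $C_\delta(W)$; this extension fixes the base, takes values in an arbitrarily small neighbourhood of it, and --- the point to watch --- displaces every point by $O(\delta)$, because in the model the collar contributes a factor $\le\delta$ to the relevant coordinates while the remaining factor is only rescaled by a number in $[1-\delta,1]$.

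\emph{Step 1: the product $C(X)\times Y$.} Given $\varepsilon>0$, I would cut $C(X)=X\times[0,1]/X\times\{1\}$ at level $1-\delta$ with $\delta$ small. The lower piece $X\times[0,1-\delta]\times Y$ is homeomorphic to $X\times Y\times\mathbb I$, hence a subcontinuum $X_\varepsilon$ of $C(X)\times Y$ with the pfpp, by hypothesis. Using the trivial shape of $X$, collapse the thin top cone $X\times[1-\delta,1]/X\times\{1\}$ onto the $\xi$-neighbourhood of its base $X\times\{1-\delta\}$, keeping that base fixed, take the product with $\mathrm{id}_Y$, and paste with the identity of $X_\varepsilon$; for $\delta$ and $\xi$ small this is an $\varepsilon$-shift $C(X)\times Y\to X_\varepsilon^{\xi}$. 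Proposition~\ref{mocny} then yields the pfpp for $C(X)\times Y$.

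\emph{Step 2: the join $X\star Y$.} Write $X\star Y=X\times Y\times[0,1]/\!\sim$. For $0<\delta<1$, on $\{s\ge\delta\}$ only the identification at $s=1$ survives, so $\{s\ge\delta\}$ is homeomorphic to $C(X)\times Y$ and, by Step~1 (which uses only the trivial shape of $X$), has the pfpp; call it $Z_\delta$. On $\{s\le\delta\}$ only the identification at $s=0$ survives, so this lower cap is homeomorphic to $X\times\bigl(Y\times[0,\delta]/Y\times\{0\}\bigr)$, i.e.\ $X$ times the thin cone $C_\delta(Y)$. Exactly as in Step~1 but on the $C_\delta(Y)$ factor, use the trivial shape of $Y$ to collapse $C_\delta(Y)$ onto the $\xi$-neighbourhood of its base $Y\times\{\delta\}$, keeping the base fixed, multiply by the (small, $\delta$-rescaling) map of the $X$-factor, and paste with the identity of $Z_\delta$. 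For $\delta$ and $\xi$ small this is an $\varepsilon$-shift $X\star Y\to Z_\delta^{\xi}$, so Proposition~\ref{mocny} gives the pfpp for $X\star Y$. Note that both trivial shape hypotheses are consumed: that of $X$ in obtaining the pfpp of $Z_\delta$, that of $Y$ in collapsing the lower cap.

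The only genuinely delicate point throughout is the displacement estimate for the collar-collapse; it is a short computation in the geometric model, resting on the fact that the collapsed factor contributes only a factor $\le\delta$ to the coordinates of the collar, that the other factor is merely rescaled by a number in $[1-\delta,1]$, and that the null-homotopy provided by trivial shape stays inside the $\xi$-neighbourhood of the collapsed factor. I expect this bookkeeping, rather than any conceptual difficulty, to be the main obstacle.
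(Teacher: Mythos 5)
Your argument is correct and follows the paper's strategy: collapse the thin cap(s) near the collapsed end(s) into an arbitrarily small neighbourhood of their base using trivial shape (the displacement being automatically $O(\delta)$ in the geometric model of the cone, exactly as you note), and apply Proposition~\ref{mocny} with the remaining truncated piece as the core subcontinuum with pfpp. The only (harmless) variation is in the join: the paper shifts both caps at once into a neighbourhood of the middle band corresponding to $X\times Y\times[\varepsilon,1-\varepsilon]\cong X\times Y\times\mathbb I$, whereas you retain the upper portion $\{s\ge\delta\}\cong C(X)\times Y$ as the core (invoking Step~1 to give it the pfpp) and collapse only the lower cap --- both are legitimate applications of Proposition~\ref{mocny}.
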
\begin{proof} in the case of $C(X)\times Y$ we can consider it as a quotient space of the product $X\times I\times Y$ and using small shifts of $C(X)$ into arbitrarily close neighborhood of a truncated cone we obtain small shifts of $C(X)$ (see proof of previous theorem) into arbitrarily close neighborhood of $X\times [0,1-\varepsilon]\times Y$ which
is homeomorphic to $X\times Y\times \mathbb I$, hence has pfpp. In the case of the join we can consider similar small shifts into arbitrarily close neighborhoods of
parts of the join corresponding to $X\times Y\times [\varepsilon,1-\varepsilon]$. In both cases applying Proposition\ref{mocny} we obtain pfpp for $C(X)\times Y$ and $X\star Y$.\end{proof}

\section{The proximate fixed point property and  span 0 continua.}
Lelek has introduced in \cite{Lel} following notions. A continuum $X$ is said to be \emph{span 0 continuum}, shortly $\sigma(X)=0$ if for every continuum $K$ and mappings $f,g;K\to X$ such that $f(K)=g(K)$ there exists a point $k\in K$ such that $f(k)=g(k)$. A continuum  $X$ is of \emph{surjective semispan} 0, shortly $\sigma^*_0(X)=0$,  if for every two mappings $f,g;K\to X$ , such that $f$ is surjective,of a continuum $K$ there exists $k\in K$ such that $f(k)=g(k)$. The continuum $X$ has 0 semispan if its every subcontinuum has 0 surjective semispan. A continuum has 0 span if and only if it has 0 semispan (\cite{Dv}). That means that a 0 span continuum has 0 surjective semispan It is easy to show the following property of a continuum $X$ of surjective 0 semispan:
\begin{lem}\label{span} Let $X$ be a continuum and let $\sigma^*_0(X)=0$. Then for every $\varepsilon>0$ there exists $\delta>0$ such that

*) If $f,g:K\to X^\delta $ are mappings of a continuum  $K$ such that the Hausdorff distance of $f(K)$ to $X$ is less than $\delta$ then there exists a point $k\in K$ such that $f(k)\stackrel{\varepsilon}{=}g(k)$.\end{lem}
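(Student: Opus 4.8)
The plan is to argue by contradiction: from a failure of $(*)$ I would manufacture a sequence of continua mapping into $Q\times Q$, pass to a Hausdorff limit continuum $Z$, and then apply the hypothesis $\sigma^*_0(X)=0$ to $Z$ to reach a contradiction. So suppose $(*)$ fails for some $\varepsilon>0$. Taking $\delta=1/n$ for $n=1,2,\dots$, I obtain a continuum $K_n$ and mappings $f_n,g_n:K_n\to X^{1/n}$ such that the Hausdorff distance of $f_n(K_n)$ to $X$ is less than $1/n$, yet $\rho(f_n(k),g_n(k))>\varepsilon$ for every $k\in K_n$. Let $h_n=(f_n,g_n):K_n\to Q\times Q$ be the diagonal map. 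Then $Z_n:=h_n(K_n)$ is a subcontinuum of $Q\times Q$ lying in the closed set $F=\{(x,y)\in Q\times Q:\rho(x,y)\ge\varepsilon\}$, and $F$ is disjoint from the diagonal $\Delta=\{(x,x):x\in Q\}$.

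Since every sequence of subcontinua of the compactum $Q\times Q$ has a subsequence converging, in the Hausdorff metric, to a subcontinuum, I pass to such a subsequence (and relabel) so that $Z_n\to Z$ with $Z\subset Q\times Q$ a continuum; being a Hausdorff limit of subsets of the closed set $F$, we get $Z\subset F$, hence $Z\cap\Delta=\emptyset$. Let $p_1,p_2:Q\times Q\to Q$ denote the projections. The projections are continuous for the Hausdorff metric, so $p_1(Z)=\lim p_1(Z_n)=\lim f_n(K_n)=X$ by uniqueness of Hausdorff limits, while $p_2(Z)=\lim g_n(K_n)\subset\bigcap_m X^{1/m}=X$ because $g_n(K_n)\subset X^{1/m}$ whenever $n\ge m$. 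Thus $p_1|_Z:Z\to X$ is a surjection of the continuum $Z$ onto $X$ and $p_2|_Z:Z\to X$ is a mapping. Since $\sigma^*_0(X)=0$, there is a point $z=(a,b)\in Z$ with $a=p_1(z)=p_2(z)=b$, i.e.\ $z\in\Delta$, contradicting $Z\cap\Delta=\emptyset$. Therefore a suitable $\delta$ exists, proving the lemma.

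The steps I expect to require the most care are the use of Hausdorff-compactness of the family of subcontinua of $Q\times Q$ — this is exactly what ensures the limit $Z$ is genuinely a continuum rather than merely a closed set, which is essential for invoking $\sigma^*_0(X)=0$ — and the verification that $p_1(Z)=X$ \emph{exactly}, which hinges on continuity of the projections with respect to the Hausdorff metric and on $f_n(K_n)\to X$. The remaining points (that $F$ is closed and avoids $\Delta$, and that $p_2(Z)\subset X$) are routine.
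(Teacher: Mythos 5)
Your proof is correct and follows essentially the same route as the paper: assume failure for some $\varepsilon$, form the graphs $\{(f_n(k),g_n(k))\}$ in $Q\times Q$, pass to a Hausdorff-limit continuum, observe that the first projection is onto $X$, the second maps into $X$, and the two coordinates stay $\varepsilon$-apart, contradicting $\sigma^*_0(X)=0$. The only difference is that you spell out the compactness of the hyperspace of subcontinua and the continuity of the projections, which the paper leaves implicit.
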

\begin{proof}Suppose not, ie., there exists a number $\varepsilon>0$ such that for every $\delta >0$ there exist  mappings $f,g:K\to X^\delta $ with Hausdorff distance of $f(K)$ from $X$ less than $\delta$. Consider in $Q\times Q$ the set $L_\delta=\{(f(k),g(k)): k\in K\}$. We can choose a convergent sequence of $L_\delta$ with $\delta\to 0$, and denote $\tilde L$ the limit continuum. Then we have that the projection of $\tilde L$ onto first coordinate equals $X$  and the projection on the  second coordinate contains in $X$ and for every $\zeta \in L_\delta$ the distance between $\pi_1(\zeta)$ and $\pi_2(\zeta)$ is greater than $\varepsilon$. This violates $\sigma^*_0(X)=0$\end{proof}
 We need a
 \begin{lem} \label{essen} Let $D$ denote the unit ball in a space $\mathbb R^n$ (in the case of $\mathbb R$ we have $D=\mathbb I$). Let for a continuum $X$ the cartesian product $X\times D$ has pfpp. Then for every number $\varepsilon>0$ there exists  a number $\delta>0$ such that

 **) if  $f:X\times D\to X^\delta $ then the restriction of the projection  $\pi_D:X\times D\to D$ to the set $\Gamma (f,\varepsilon)=\{(x,y)\in X\times D: F(x,y)\stackrel {\varepsilon}{=} x\}$ is essential.\end{lem}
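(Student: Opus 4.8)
The plan is to prove the contrapositive: assuming $\pi_D|_{\Gamma(f,\varepsilon)}$ is inessential, I produce, once $\delta$ is small enough, a map $g\colon X\times D\to(X\times D)^{\delta}$ with no $\varepsilon$-fixed point, which contradicts the pfpp of $X\times D$; so it will suffice to take $\delta$ from the pfpp of $X\times D$ applied to the number $\varepsilon$ (and small enough that $X^{\delta}\times D\subseteq(X\times D)^{\delta}$). First I reduce to small $\varepsilon$: if $\varepsilon'<\varepsilon$ then $\Gamma(f,\varepsilon')\subseteq\Gamma(f,\varepsilon)$ and a map $\Gamma(f,\varepsilon)\to\partial D$ extending $\pi_D$ on $\pi_D^{-1}(\partial D)\cap\Gamma(f,\varepsilon)$ restricts to such a map on $\Gamma(f,\varepsilon')$, so essentiality on $\Gamma(f,\varepsilon')$ implies it on $\Gamma(f,\varepsilon)$; hence I may assume $\varepsilon<\tfrac12$.

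So suppose $\pi_D|_{\Gamma(f,\varepsilon)}$ is inessential, i.e. $\pi_D$ restricted to $\Gamma(f,\varepsilon)\cap(X\times\partial D)$ extends to a map $r\colon\Gamma(f,\varepsilon)\to\partial D$. The key feature of $r$ is that it is forced to be nearly radial near the sphere: if $z_k\in\Gamma(f,\varepsilon)$ with $|\pi_D(z_k)|\to 1$, then, $X$ being compact, a subsequence of $(z_k)$ converges to a point of $\Gamma(f,\varepsilon)\cap(X\times\partial D)$, at which $r=\pi_D$, so $\rho\big(r(z_k),\pi_D(z_k)/|\pi_D(z_k)|\big)\to0$. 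Using this, together with the fact that the radial projection $(x,y)\mapsto y/|y|$ already extends $\pi_D$ over the collar $X\times\{|y|\ge\tfrac12\}$ and that $D$ is an AR (so that the needed homotopy can be spliced), I replace $r$ by an extension which literally coincides with $y/|y|$ on $\Gamma(f,\varepsilon)\cap(X\times\{|y|\ge\tfrac12\})$. Now extend $z\mapsto-r(z)$ (values in $\partial D\subseteq D$) over $X\times D$ to a map $\psi\colon X\times D\to D$ by the AR property of $D$, and set $g=(f,\psi)\colon X\times D\to X^{\delta}\times D\subseteq(X\times D)^{\delta}$. By the pfpp, $g$ has an $\varepsilon$-fixed point $(x_0,y_0)$; then $\rho(f(x_0,y_0),x_0)\le\varepsilon$, so $(x_0,y_0)\in\Gamma(f,\varepsilon)$ and $\psi(x_0,y_0)=-r(x_0,y_0)$, while $\rho(-r(x_0,y_0),y_0)\le\varepsilon$ gives $|y_0+r(x_0,y_0)|\le\varepsilon$. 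Since $|r(x_0,y_0)|=1$ this forces $|y_0|\ge1-\varepsilon>\tfrac12$, whence $r(x_0,y_0)=y_0/|y_0|$ and $|y_0+r(x_0,y_0)|=1+|y_0|\ge1>\varepsilon$, a contradiction. Therefore $\pi_D|_{\Gamma(f,\varepsilon)}$ is essential.

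The one genuinely delicate step is the uniform radial normalization of $r$: one must show that the radial map on $\Gamma(f,\varepsilon)\cap(X\times\{|y|=\tfrac12\})$ extends over $\Gamma(f,\varepsilon)\cap(X\times\overline{\tfrac12 D})$ into $\partial D$ knowing only that some extension $r\colon\Gamma(f,\varepsilon)\to\partial D$ exists, with no constants depending on $f$. This is exactly where inessentiality is used in earnest; a naive straight-line homotopy of $r$ toward the radial map can fail where $r$ happens to be antipodal to it, so the argument must go through an obstruction / homotopy-extension computation for maps into the sphere $\partial D$ (or, alternatively, through a compactness argument over the family of all such $f$, passing to Hausdorff limits of the graphs of $f$ and of $r$). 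The remaining ingredients — the reduction to small $\varepsilon$, the limit argument giving radial behaviour near $\partial D$, and the closing estimate — are routine.
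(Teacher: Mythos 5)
Your overall skeleton is the same as the paper's: assume $\pi_D|\Gamma(f,\varepsilon)$ inessential, take an extension $r\colon\Gamma(f,\varepsilon)\to\partial D$ of $\pi_D$, build $g=(f,-\tilde r)$, and show an $\varepsilon$-fixed point of $g$ is impossible because at such a point the second coordinate lies on $\partial D$ and is nearly antipodal to $y_0$. But the step you yourself flag as ``genuinely delicate'' --- replacing $r$ by an extension that \emph{coincides} with $y/\|y\|$ on $\Gamma(f,\varepsilon)\cap\{\|y\|\ge 1/2\}$ --- is a real gap, not a routine verification, and your proof collapses without it. What your compactness argument actually gives is only this: for the fixed $f$ and $r$ there is some $\eta>0$ (depending on $f$ and $r$) such that $r$ is within $0.1$ of the radial map on the thin collar $\{\|y\|\ge 1-\eta\}$. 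Your closing estimate, however, needs control of $r$ at the $\varepsilon$-fixed point, where you can only force $\|y_0\|\ge 1-\varepsilon$; if $\eta<\varepsilon$ you know nothing about $r$ there. To get from the thin $f$-dependent collar to the fixed collar $\{\|y\|\ge 1/2\}$ you would have to extend the radial map on $\Gamma\cap\{\|y\|=1/2\}$ over $\Gamma\cap\{\|y\|\le 1/2\}$ into $\partial D$; knowing only that $r$ extends there does not give this (you would need $r$ and the radial map to be homotopic on $\Gamma\cap\{\|y\|=1/2\}$, which you have no reason to believe), and neither of the two routes you gesture at is carried out. In particular the ``compactness over the family of all $f$'' alternative does not obviously work either, since inessentiality and the sets $\Gamma(f,\varepsilon)$ do not pass to limits in a useful way.

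The paper sidesteps this entirely, and the fix is worth internalizing: instead of modifying $r$ to be radial on a thick collar, reparametrize the disc. Choose $\eta$ as above and let $h(y)=y\|y\|^{\log_{0.25}(1-\eta)}$, a radial homeomorphism of $D$ carrying $B(\mathbf{0},1-\eta)$ onto $B(\mathbf{0},0.25)$, and set $g(x,y)=\bigl(f(x,h^{-1}(y)),-\tilde r(x,h^{-1}(y))\bigr)$. If $g$ moves $(x_0,y_0)$ by at most $\min(0.5,\varepsilon)$, then $(x_0,h^{-1}(y_0))\in\Gamma(f,\varepsilon)$, the second coordinate has norm $1$, so $\|y_0\|\ge 0.5$, hence $\|h^{-1}(y_0)\|>1-\eta$; there $r$ is within $0.1$ of $\pi_D$, and since $h$ preserves rays, $-\tilde r(x_0,h^{-1}(y_0))$ is within $0.1$ of a point antipodal to the ray of $y_0$, giving displacement greater than $0.5$ --- contradiction. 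Note that ``close to radial'' suffices (exact coincidence is not needed), and that the dependence of $\eta$ and $h$ on $f$ is harmless because the final displacement bound $\min(0.5,\varepsilon)$ is uniform; your worry about constants depending on $f$ is aimed at the wrong place, while the genuine difficulty --- matching the $f$-dependent collar width to the fixed threshold in the last estimate --- is exactly what the reparametrization resolves. Your reduction to $\varepsilon<1/2$ and your final computation are fine as written.
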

 \begin{proof} Suppose not, i.e., for each $\delta>0$ there exists $f:X\times D\to X^\delta$ such that the restriction $\pi_D|\Gamma (f,\varepsilon)$ is inessential. Hence there exists a mapping $\Phi:\Gamma(f,\varepsilon)\to\partial D$ such that $\Phi(\zeta)=\pi_D(\zeta)$ for $\zeta \in \Gamma (f,\varepsilon)\cap \pi_D^{-1}(\partial D)$.Let $\eta>0$ be such that $\Phi(\zeta)\stackrel{0.1}{=}\pi_D(\zeta)$ provided $||\pi_D(\zeta)||>1-\eta$. Let $h:D\to D$ denote  the homeomorphism defined by  the formula $h(y)=y||y||^{\log_{0.25}(1-\eta)}$ ( $h$ maps the ball $B({\bf 0},1-\eta)$ onto the ball $B({\bf 0},0.25)$). Let $\tilde\Phi$ be an extension of $\Phi$ over $X\times D$. We define $g:X\times D\to X^\delta \times D$ by $g(x,y)=(f(x,h^{-1}(y)),-\tilde\Phi(x, h^{-1}(y)))$. The mapping $g$  moves points more then $\min(0.5,\varepsilon)$ and this violates pfpp of $X\times D$ because $\delta$ can be arbitrarily small.\end{proof}
 \begin{tw}\label{iloczyn} Let $X\times \mathbb I$ has pfpp and let $Y$ be of surjective semispan 0. Then $X\times Y$ has pfpp as well.\end{tw}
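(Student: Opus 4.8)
The plan is to split a given self-approximation $f$ of $X\times Y$ into its two coordinate maps $f=(f_1,f_2)$, with $f_1:X\times Y\to X^{\delta}$ and $f_2:X\times Y\to Y^{\delta}$, to control the first coordinate by the hypothesis on $X\times\mathbb I$, and to extract the coincidence in the second coordinate from $\sigma^*_0(Y)=0$ via Lemma \ref{span}. Fix $\varepsilon>0$ and put $\varepsilon_1=\varepsilon/2$, the metrics being fixed so that $\varepsilon_1$-closeness in each factor yields $\varepsilon$-closeness in $X\times Y$ and so that $(X\times Y)^{\delta}\subseteq X^{\delta}\times Y^{\delta}$. Observe first that $X$ is a retract of $X\times\mathbb I$ (embed $x\mapsto(x,0)$, retract by $\pi_X$), so by Klee's proposition on retracts $X$ itself has pfpp. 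Now choose $\delta>0$ so small that: (a) every map $X\to X^{\delta}$ has an $\varepsilon_1$-fixed point (pfpp of $X$); (b) the conclusion of Lemma \ref{span} holds for $Y$ with parameters $\varepsilon_1$ and this $\delta$; (c) the conclusion of Lemma \ref{essen} holds for $X\times\mathbb I$ with parameters $\varepsilon_1$ and this $\delta$. Let $f:X\times Y\to(X\times Y)^{\delta}$ be arbitrary; we seek $p$ with $p\stackrel{\varepsilon}{=}f(p)$.

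Consider the closed set $A=\Gamma(f_1,\varepsilon_1)=\{(x,y)\in X\times Y:f_1(x,y)\stackrel{\varepsilon_1}{=}x\}$. By (a), for every $y\in Y$ the map $f_1(\cdot,y):X\to X^{\delta}$ has an $\varepsilon_1$-fixed point, so $\pi_Y(A)=Y$. The heart of the proof is the claim that $A$ contains a subcontinuum $K$ with $\pi_Y(K)$ dense in $Y$; since $K$ is closed this means $\pi_Y(K)=Y$. Granting the claim, apply Lemma \ref{span} to the continuum $K$ with the two maps $\pi_Y|_K:K\to Y\subseteq Y^{\delta}$ (whose image is all of $Y$) and $f_2|_K:K\to Y^{\delta}$: we obtain $k=(x_0,y_0)\in K$ with $\pi_Y(k)=y_0\stackrel{\varepsilon_1}{=}f_2(k)$. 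Since $k\in K\subseteq A$ we also have $f_1(k)\stackrel{\varepsilon_1}{=}x_0$, whence $f(k)=(f_1(k),f_2(k))\stackrel{\varepsilon}{=}(x_0,y_0)=k$, which is what we wanted.

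It remains to prove the claim, and this is the step I expect to be the main obstacle. When $Y$ is the arc $\mathbb I$ the claim is exactly the content of Lemma \ref{essen}: essentiality of $\pi_{\mathbb I}|_{\Gamma(f_1,\varepsilon_1)}$ means, since $\partial\mathbb I$ consists of two points, that some component of $\Gamma(f_1,\varepsilon_1)$ meets both endpoint-fibres, hence maps onto a connected subset of $[-1,1]$ containing $\pm1$, i.e.\ onto $\mathbb I$. For a general span-$0$ continuum $Y$ one argues by contradiction: assuming no component of $A$ projects onto a dense subset of $Y$, one uses the resulting ``inessential'' decomposition of $A$ over $Y$, the span-$0$ property of $Y$ in its approximate form (Lemma \ref{span}), and the collar-stretching homeomorphism together with the sign reversal from the proof of Lemma \ref{essen}, to manufacture a self-map of $X\times\mathbb I$ that moves every point by more than $\min(\tfrac12,\varepsilon_1)$, contradicting the pfpp of $X\times\mathbb I$. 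The delicate point is that a component of $A$ lying over a general $Y$ has no ``endpoints'' the way a subarc of $\mathbb I$ does, so the two-sided splitting on which the Lemma \ref{essen} construction rests must be organized through the span-$0$ structure of $Y$ (intuitively, by comparing $\pi_Y|_A$ with a surjection $Y\to\mathbb I$ and letting $\sigma^*_0(Y)=0$ absorb the non-injectivity of that surjection) rather than through the linear order of the arc; making this precise is where the real work lies.
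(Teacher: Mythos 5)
Your overall architecture --- split $f$ into coordinate maps, use the pfpp of $X\times\mathbb I$ to find a continuum inside $\Gamma(f_1,\varepsilon_1)$ whose $Y$-projection is large, then feed that continuum together with $f_2$ into Lemma \ref{span} --- is the same as the paper's. But the step you isolate as ``the claim'' (a subcontinuum $K\subset\Gamma(f_1,\varepsilon_1)$ with $\pi_Y(K)=Y$) is the entire content of the proof, and you do not prove it; your sketch of how one might prove it (an ``inessential decomposition of $A$ over $Y$'' plus the sign-reversal trick from the proof of Lemma \ref{essen} adapted to a general base) is exactly the hard part that the paper is engineered to avoid. Lemma \ref{essen} as stated only applies when the second factor is a disc, and there is no ready-made notion of ``essential over $Y$'' for a general continuum $Y$; surjectivity of $\pi_Y|_A$, which you do establish, is far from producing a single continuum in $A$ over all of $Y$. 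So this is a genuine gap, not a routine verification.

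The paper's device is to never work over $Y$ itself. Since every continuum in $Q$ is a Hausdorff limit of arcs lying in its small neighborhoods, one picks an arc $J\subset Y^\eta$ whose Hausdorff distance from $Y$ is less than $\delta$, extends the hypothetical counterexample $f$ (assumed to move every point of $X\times Y$ by more than $2\varepsilon$) to $\tilde f:X\times Y^\eta\to X^\delta\times Y^\delta$ still moving every point by more than $2\varepsilon$, and restricts attention to $X\times J\cong X\times\mathbb I$. Now Lemma \ref{essen} with $D=\mathbb I$ applies verbatim: essentiality of $\pi_J$ restricted to $\Gamma(\pi_{X^\delta}\circ\tilde f|_{X\times J},\varepsilon)$ yields a component joining the two endpoint fibres, i.e.\ a continuum $K$ with $\pi_J(K)=J$. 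This is where your two-endpoints observation is actually used --- over $J$, not over $Y$. Finally, condition *) of Lemma \ref{span} was deliberately phrased so that the first map need only have image Hausdorff-close to $Y$, not equal to $Y$: applying it to $\pi_{Y^\eta}|_K$ (with image $J$) and $\pi_{Y^\delta}\circ\tilde f|_K$ produces the coincidence point. That point lies in $X\times J$ rather than in $X\times Y$, which is why the argument must run by contradiction through the extended map $\tilde f$ instead of directly exhibiting an $\varepsilon$-fixed point of $f$. If you insert this arc-approximation step your argument closes up; without it, the claim over $Y$ itself remains unsupported.
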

 \begin{proof} Suppose on the contrary that there exists a number $\varepsilon>0$ such that for every number $\delta >0$ there exists  a mapping $f:X\times Y\to X^\delta\times Y^\delta$ moving points more  than $2\varepsilon$. Take $\delta$ such that conditions *) from Lemma \ref{span} and **)from Lemma \ref{essen} with $D=\mathbb I$ are fulfilled. We can  extend $f$ to a mapping $\tilde f:X\times Y^\eta \to X^\delta\times Y^\delta$ moving all points more than $2\varepsilon$. Let $J\subset Y^\eta$ be an arc with Hausdorff distance from $Y$ less than $\delta$.Let $g:X\times J\to X^\delta$ denote the mapping $\pi_{X^\delta}\circ\tilde f$ restricted to $X\times J$. From **) there exists a continuum $K$ in the set  $\Gamma(g,\varepsilon)$ the image of which under $\pi_{Y^\eta}$ is $J$. Thus from *) there exists a point $(x,y)\in K$ such that $\pi_{Y^\eta}\circ \tilde f(x,y)\stackrel{2\varepsilon}{=}(x,y)$ and hence $\tilde f(x,y)\stackrel{2\varepsilon}{=}(x,y)$.\end{proof}

 \begin{tw} For every  family $Y_i,i=1,2\dots,n $ of continua of surjective semispan 0 their product has pfpp. (cf.\cite{Mar})\end{tw}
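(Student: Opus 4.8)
The plan is to argue by induction on $n$, the number of factors, but to make Theorem \ref{iloczyn} applicable at each step one has to carry along an auxiliary supply of interval factors. Concretely, for $n\geq 0$ let $R(n)$ be the assertion: \emph{for every $m\geq 0$ and all continua $Y_1,\dots ,Y_n$ of surjective semispan $0$, the product $Y_1\times\cdots\times Y_n\times\mathbb I^m$ has pfpp}, with the convention that an empty product is a one-point space and $\mathbb I^0$ is a point. The theorem we want is $R(n)$ read with $m=0$.

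First I would dispose of the base case $R(0)$. The cube $\mathbb I^m$ is a compact AR, hence a compact ANR; it has the fixed point property by Brouwer's theorem, so by the Proposition above (an ANR with the fixed point property has pfpp) it has pfpp. (The case $n=1$ with $m=0$ is then an immediate sanity check: taking $X$ a point in Theorem \ref{iloczyn}, $X\times\mathbb I=\mathbb I$ has pfpp, so $X\times Y_1=Y_1$ has pfpp.)

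For the inductive step assume $R(n-1)$ and let $Y_1,\dots ,Y_n$ be continua of surjective semispan $0$ and $m\geq 0$. Put $X=Y_1\times\cdots\times Y_{n-1}\times\mathbb I^m$, which is a continuum. By $R(n-1)$ applied to the family $Y_1,\dots ,Y_{n-1}$ with interval exponent $m+1$, the product $X\times\mathbb I=Y_1\times\cdots\times Y_{n-1}\times\mathbb I^{m+1}$ has pfpp. Since $Y_n$ is of surjective semispan $0$, Theorem \ref{iloczyn} yields that $X\times Y_n=Y_1\times\cdots\times Y_n\times\mathbb I^m$ has pfpp, which is exactly $R(n)$; and $m$ was arbitrary.

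The only point that needs care is the choice of the strengthened statement $R(n)$: a naive induction on the number of factors does not close, because invoking Theorem \ref{iloczyn} forces one to cross with $\mathbb I$, and that does not decrease the number of factors. Tracking the interval factors by a separate index $m$ and decreasing only the count $n$ of semispan-$0$ factors makes the induction well founded, and everything else is immediate from Theorem \ref{iloczyn} together with the cube case, so there is no genuine obstacle beyond this bookkeeping.
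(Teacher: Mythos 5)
Your argument is correct and is essentially the paper's proof: the paper also starts from $\mathbb I^n$ (which has pfpp as an AR with the fixed point property) and successively replaces the interval factors by the $Y_i$ via Theorem \ref{iloczyn}. Your strengthened induction hypothesis $R(n)$ with the auxiliary exponent $m$ is just an explicit bookkeeping of that same replacement chain, and unwinding it reproduces the paper's sequence $\mathbb I^n,\ Y_1\times\mathbb I^{n-1},\ \dots,\ Y_1\times\cdots\times Y_n$.
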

 \begin{proof} We can start with $\mathbb I^n$ which has pfpp as an AR and successively replace the copies of $\mathbb I$ with $Y_i$ applying Theorem \ref{iloczyn}\end{proof}
 From this, using Proposition \ref{infprod} we get
 \begin{tw}For every family $Y_i, i=1,2,\dots$ of continua with $\sigma(X_i)=0$ their product has pfpp.\end{tw}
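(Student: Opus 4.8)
The plan is to reduce the infinite-product statement to the finite-product theorem just proved, with Proposition~\ref{infprod} doing the passage to the limit. First I would recall the chain of implications among the span-type invariants summarised in the text: if $\sigma(Y_i)=0$ then $Y_i$ has $0$ semispan, i.e.\ every subcontinuum of $Y_i$ has $0$ surjective semispan, by the Davis--Oversteegen equivalence of $0$ span and $0$ semispan \cite{Dv}; in particular $\sigma^*_0(Y_i)=0$ for each $i$. So every factor of the family satisfies the hypothesis of the preceding theorem.

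Next, fix an arbitrary finite subfamily $Y_{i_1},\dots,Y_{i_k}$. Since each $Y_{i_j}$ has surjective semispan $0$, the preceding theorem applies and shows that the finite product $Y_{i_1}\times\cdots\times Y_{i_k}$ has the pfpp. (Internally that theorem proceeds by starting from $\mathbb I^{k}$, which is an AR and therefore has the pfpp, and replacing the coordinates one at a time by means of Theorem~\ref{iloczyn}; at each stage the input ``$X\times\mathbb I$ has pfpp'', where $X$ is the partial product already treated, is exactly what the induction furnishes.) Hence every cartesian product of a finite subfamily of $\{Y_i\}_{i=1}^{\infty}$ has the pfpp.

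Finally, Proposition~\ref{infprod} applies verbatim: for a family of continua all of whose finite subproducts have the pfpp, the countable product $\prod_{i=1}^{\infty}Y_i$ has the pfpp. This is precisely the assertion of the theorem.

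The only point that calls for any attention is the first one, namely invoking the equivalence of $0$ span and $0$ semispan in order to upgrade the stated hypothesis $\sigma(Y_i)=0$ to the hypothesis $\sigma^*_0(Y_i)=0$ that the finite-product theorem actually consumes; everything afterwards is a mechanical application of two results already in hand. There is thus no genuine obstacle at this final stage: all of the analytic work --- the $\varepsilon$--$\delta$ bookkeeping with the neighborhoods $X^{\delta}$, the essentiality argument of Lemma~\ref{essen}, and the span-$0$ limiting argument of Lemma~\ref{span} --- has already been absorbed into Theorem~\ref{iloczyn} and the finite-product theorem.
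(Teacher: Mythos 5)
Your proposal is correct and follows exactly the paper's route: the finite-product theorem (whose hypothesis $\sigma^*_0(Y_i)=0$ follows from $\sigma(Y_i)=0$ via the span/semispan equivalence already noted in the text) combined with Proposition~\ref{infprod} on countable products. No issues.
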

 Oversteegen and Tymchatyn  showed in \cite{O-T} that span 0 continua are tree-like,i.e., they are trivial shape one dimensional continua.
 Using this and Propositions \ref{cstrsh} and \ref{jointrsh} we obtain
 \begin{tw} For a span 0 continuum $X$ its  cone $C(X)$ and  suspension $S(X)$  have pfpp. Moreover if $X_i,i=1,\dots n$ are span 0 continua then every join $X_1\star \dots\star X_n$ as well as every multiple application of operations $S,C,\star,\times $ to this spaces has pfpp\end{tw}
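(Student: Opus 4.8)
The plan is to push everything through the machinery already assembled. By \cite{O-T} a span $0$ continuum is tree-like, hence one-dimensional and of trivial shape; it has surjective semispan $0$; and the arc $\mathbb I$ has surjective semispan $0$ as well (as the intermediate value theorem shows at once). Consequently, for span $0$ continua $Y_1,\dots,Y_r$ and any $m\ge 0$ the product $Y_1\times\dots\times Y_r\times\mathbb I^m$ is a product of continua of surjective semispan $0$, hence has pfpp by the theorem on such products. Taking $r=m=1$, $X\times\mathbb I$ has pfpp for every span $0$ continuum $X$; together with the trivial shape of $X$, Proposition \ref{cstrsh} then gives pfpp for $C(X)$ and $S(X)$, and Proposition \ref{jointrsh} gives pfpp for $X_1\star X_2$ (now $X_1\times X_2\times\mathbb I$ has pfpp and $X_1,X_2$ are of trivial shape).

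For iterated operations I would argue by induction on complexity. First note that $C,S,\star,\times$ all preserve trivial shape: $C(Z)$ is contractible; the quotient maps $Z\times[0,1]\to S(Z)$ and $Z_1\times Z_2\times[0,1]\to Z_1\star Z_2$ have all point-preimages cell-like, hence are cell-like maps and in particular shape equivalences, so $S(Z)$ and $Z_1\star Z_2$ are of trivial shape when $Z,Z_1,Z_2$ are; and a product of cell-like compacta is cell-like. Thus every continuum obtained from span $0$ continua by iterating $S,C,\star,\times$ has trivial shape. Now assign to such a continuum the total number of occurrences of the symbols $S,C,\star$ in a defining term, so that products of span $0$ continua have complexity $0$, and prove by induction on the sum of complexities the statement that \emph{every finite product $Z_1\times\dots\times Z_k\times\mathbb I^m$ of such continua has pfpp.} If all $Z_i$ have complexity $0$ this is a product of span $0$ continua, covered by the first paragraph. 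Otherwise write each $Z_i$ as a product of ``atoms'' --- span $0$ continua and subexpressions whose outermost symbol is $S$, $C$ or $\star$ --- and pick an atom of the second kind, say $C(W)$, $S(W)$ or $W\star W'$, where $W,W'$ have smaller complexity and, by the previous remark, trivial shape. Peeling it off by Proposition \ref{jointrsh} (for $C(W)$) or by the evident analogues for a suspension factor and for a join factor dragging along a further product factor --- proved exactly as Proposition \ref{jointrsh}, by shifting the two ends of the suspension, resp. of the join, into an arbitrarily close neighbourhood of the ``middle'' $W\times(\mathrm{rest})\times\mathbb I$, resp. $W\times W'\times(\mathrm{rest})\times\mathbb I$, and applying Proposition \ref{mocny} --- reduces the pfpp of $Z_1\times\dots\times Z_k\times\mathbb I^m$ to the pfpp of the product of all the remaining atoms together with $W$ (resp. $W$ and $W'$) and one more copy of $\mathbb I$. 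That product has strictly smaller total complexity, so the induction closes; the case $k=1$, $m=0$ gives pfpp for every multiple application of $S,C,\star,\times$ to span $0$ continua, in particular for $C(X)$, $S(X)$ and $X_1\star\dots\star X_n$.

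The step I expect to be the crux is keeping track of the ``cushion'': each application of Proposition \ref{cstrsh} or \ref{jointrsh} removes one constructive operation but forces one more copy of $\mathbb I$ into the product whose pfpp remains to be proved, so the inductive statement has to carry an arbitrary number of trailing arcs from the start --- which is harmless, since the base case (a product of span $0$ continua, with any number of arcs adjoined, is still such a product) is settled uniformly in that number by the product theorem. A subsidiary point is that Proposition \ref{jointrsh} is stated only for $C(X)\times Y$ and for $X\star Y$; the variants used above, $S(X)\times Y$ and $(X\star Y)\times Y'$, follow ``quite alike'', just as the suspension case of Proposition \ref{cstrsh} parallels its cone case.
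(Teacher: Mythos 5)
Your proposal is correct and follows exactly the route the paper intends: tree-likeness of span $0$ continua from \cite{O-T} gives trivial shape, the product theorem for surjective semispan $0$ continua supplies pfpp of the auxiliary products $Y_1\times\dots\times Y_r\times\mathbb I^m$, and Propositions \ref{cstrsh} and \ref{jointrsh} (with Proposition \ref{mocny} for the ``quite alike'' variants) peel off the outermost operation. The paper states this in one line without any argument for the iterated case, so your complexity induction with the trailing copies of $\mathbb I$ and your verification that $S$, $C$, $\star$, $\times$ preserve trivial shape are exactly the details it leaves implicit, supplied correctly.
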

 \begin{tw} If $Y$ is an $ANR$ continuum such that $Y\times\mathbb I$ has fixed point property then for every span 0 continuum $X$ the product $Y\times X$ has pfpp.\end{tw}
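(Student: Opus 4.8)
The plan is to deduce the statement directly from Theorem \ref{iloczyn}, which already carries essentially all the weight; the role of this last theorem is just to feed it the right hypotheses. There are three elementary observations to assemble. First, since $Y$ is an ANR continuum and $\mathbb I=[-1,1]$ is an AR, the product $Y\times\mathbb I$ is again an ANR continuum. Second, we are assuming $Y\times\mathbb I$ has the fixed point property, so by the earlier proposition (Klee) that an ANR with fpp has pfpp, the continuum $Y\times\mathbb I$ in fact has the pfpp. Third, a span $0$ continuum has $0$ semispan, hence in particular $0$ surjective semispan, so $\sigma^*_0(X)=0$.

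With these in hand I would apply Theorem \ref{iloczyn} with the two factors playing the following roles: $Y$ takes the place of the factor called ``$X$'' there (the one for which the product with $\mathbb I$ must have pfpp), and $X$ takes the place of the factor called ``$Y$'' there (the one required to have surjective semispan $0$). The hypothesis ``$(\,\cdot\,)\times\mathbb I$ has pfpp'' is satisfied by the second observation, and the hypothesis ``surjective semispan $0$'' is satisfied by the third. Theorem \ref{iloczyn} then gives that $Y\times X$ has pfpp, which is exactly the assertion.

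I do not expect a real obstacle here: the proof is a two–line composition of results already proved in the paper, and the only points that need a word of justification are that the class of ANR continua is closed under multiplication by an interval, and that the fpp hypothesis on $Y\times\mathbb I$ may be upgraded to pfpp by the ANR-fpp-implies-pfpp proposition. One may add, as a remark rather than as part of the proof, that by combining this with Propositions \ref{cstrsh} and \ref{jointrsh} (applicable because a span $0$ continuum is tree-like, hence of trivial shape) every continuum obtained from such a $Y\times X$ by further iterated application of the operations $\times$, $C$, $S$, $\star$ again has pfpp.
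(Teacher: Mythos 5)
Your proof is correct and is precisely the argument the paper intends (the theorem is stated there without an explicit proof): upgrade the fpp of the ANR continuum $Y\times\mathbb I$ to pfpp via the ANR-with-fpp-has-pfpp proposition, note that span $0$ implies surjective semispan $0$, and apply Theorem \ref{iloczyn} with the roles of the factors as you describe. No gaps.
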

 \section{Essentially surrounded continua in $\mathbb R^n$}

 The next is a  modification for embeddings in $\mathbb R^n$ of  Professor  Sieklucki's  idea of $q-essential$ mappings.
 \begin{df} A continuum  $X\subset \mathbb R^n$ will be called \emph{essentially surrounded} if for every number  $\varepsilon>0$ there exists an
 $n$-dimensional disc $D$ in $\mathbb R^n$ such that$X\subset \phi  (D)$ and (* the restriction of projection $\pi_D$ of $X\times D$ to the set
 $G=\{(x,y): \rho(x,y)\leq \varepsilon\}$ is essential \end{df}
 It is easy to check that the condition *) in previous definition can be replaced with the following **) there exists a compactum $C$ and mappings
 $\phi:C\to X$ and $\psi C\to D$ such that $\phi(\zeta)\stackrel\leq {\varepsilon}{=}\psi(\zeta))$ and $\psi$ is essential.
 We will consider also some stronger condition
 \begin{df} A continuum  $X\subset \mathbb R^n$ will be called \emph{simply essentially surrounded} if for every number  $\varepsilon>0$ there exists an
 $n$-dimensional disc $D$ in $\mathbb R^n$ such that for every $\delta>0$ there exist a  mapping $\psi: D\to D$ and a mapping $\phi: D\to X^\delta$ such that $\psi$ is homotopic to the identity mapping of $D$ by a homotopy of the pair $(D,\partial D)$ and $\rho(\phi(x),\psi(x)\leq\varepsilon$ for $x\in D$
  \end{df}
 In the fig. is pictured continuum in $\mathbb R^2$ which is simply essentially surrounded (the closure of bounded component of complement of the warsaw circle in $\mathbb R^2$). I do not know any continuum which is essentially surrounded but is not simply essentially surrounded.
 \begin{tw} If $X\subset \mathbb R^n$ is an essentially surrounded continuum then $X$ has pfpp.\end{tw}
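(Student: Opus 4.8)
The plan is to argue by contradiction, after replacing both the hypothesis and the conclusion by more workable equivalents. Assume $X$ does not have the pfpp. By Sanjurjo's theorem this gives an $\varepsilon_0>0$ such that for every $\delta>0$ there is a map $f\colon X\to X^\delta$ with $\rho(x,f(x))>\varepsilon_0$ for all $x\in X$. I would then feed the number $\varepsilon_0/4$ into the definition of ``essentially surrounded'': this produces an $n$-disc $D\subset\mathbb R^n$ with $X\subset D$ for which the projection $\pi_D$, restricted to $G=\{(x,y)\in X\times D:\rho(x,y)\le\varepsilon_0/4\}$, is essential. The decisive observation is that essentiality of a map into an $n$-cell is the same as universality in the sense of Holszty\'nski recalled above; hence $\pi_D|_G\colon G\to D$ is universal.

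The second ingredient is a way of carrying $X^\delta$ back into $D$. Since $D$ is a disc, it is a compact ANR in $\mathbb R^n$, hence a retract of some open neighbourhood $U\supset D$, via a retraction $r\colon U\to D$. As $r$ is continuous and equals the identity on $D\supset X$, one has $\sup\{\rho(z,r(z)):z\in X^\delta\}\to0$ as $\delta\to0$; so I would fix $\delta>0$ small enough that $X^\delta\subset U$ and $\rho(z,r(z))<\varepsilon_0/4$ for every $z\in X^\delta$, and take the map $f\colon X\to X^\delta$ supplied by the first paragraph for this $\delta$.

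Now define $g\colon G\to D$ by $g(x,y)=r\bigl(f(x)\bigr)$. Universality of $\pi_D|_G$ applied to $g$ produces a point $(x_0,y_0)\in G$ with $y_0=\pi_D(x_0,y_0)=g(x_0,y_0)=r(f(x_0))$. Then $\rho(x_0,f(x_0))\le\rho(x_0,y_0)+\rho\bigl(r(f(x_0)),f(x_0)\bigr)<\varepsilon_0/4+\varepsilon_0/4<\varepsilon_0$, contradicting the choice of $f$. Therefore $X$ has the pfpp.

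I expect the only genuinely non-routine point to be the identification ``essential into an $n$-cell $\Leftrightarrow$ universal'' (Holszty\'nski); everything else is the triangle inequality together with the elementary estimate on the retraction $r$. A self-contained variant avoiding that identification is possible: from the homotopy form of essentiality used in Lemma~\ref{essen}, one uses $f$ to push each $y$ away from $r(f(x))$ towards $\partial D$, obtaining a map $G\to\partial D$ that agrees with $\pi_D$ on $G\cap\pi_D^{-1}(\partial D)$ and hence contradicts essentiality; this version is more delicate, since one has to control the push near $\partial D$ and use that $f$ keeps $r(f(x))$ a definite distance from points of $G$ lying over $\partial D$. The whole argument can equally be organised starting from the equivalent condition $(**)$ in place of $(*)$, with the mappings $\phi\colon C\to X$ and $\psi\colon C\to D$ in the roles of the two restricted projections.
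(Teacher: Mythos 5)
Your proposal is correct and follows essentially the same route as the paper: take the disc $D$ from the definition, use Holszty\'nski's identification of essential maps onto an $n$-cell with universal maps, retract a small neighbourhood of $D$ back onto $D$ with small displacement, and conclude by the triangle inequality. The only cosmetic differences are that you work with condition $(*)$ and the set $G$ (rather than the equivalent $(**)$ with $\phi,\psi$ on a compactum $C$, which the paper uses) and that you phrase the argument as a contradiction via Sanjurjo's theorem instead of directly.
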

 \begin{proof} Consider for an $\varepsilon>0$ a disc $D$ containing $X$ such that there are a compactum $C$ and mappings  $\phi:C\to X$ and $\psi C\to D$ such that $\phi(\zeta)\stackrel\leq {\varepsilon/2}{=}\psi(\zeta))$ and $\psi$ is essential and hence universal. Take $\delta>0$ such that there exists a retraction $r:D^\delta\to D$ shifting points less than $\varepsilon/2$. Now, let $f:X\to D^\delta$. Then $r\circ f:X\to D$, and there exists a point $c\in C$ such that $\psi(c)=r(f(\phi(c))$. But $r(f(\phi(c)\stackrel{\varepsilon/2}{=}f(\phi(c))$ and $\phi(c))\stackrel{\varepsilon/2}{=}\psi(c)$ thus $f(\phi(c))\stackrel{\varepsilon}{=}\phi(c)$.\end{proof}
 \begin{df} We say that a continuum $Y$ has  \emph{close $n$-pith} if for every number $\varepsilon>0$ there exists a number $\delta>0$ such that for every  $\eta>0$ there  exists $p:D^n\to X^\eta$( we will call this mapping \emph{a pith}) such that
    if $\phi:Z\to D^n$ is essential and $\psi:Z\to X^\delta$ is any mapping then there exists a point $z\in Z$ such that
 $p(\phi(z))\stackrel{\varepsilon}{=}\psi(z)$\end{df}
 \begin{pr} If $X\subset \mathbb R^n$ is simply essentially surrounded then $X$ has close $n$-pith.\end{pr}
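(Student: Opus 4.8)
The plan is to build the pith directly out of the disc $D$ and the maps $\phi,\psi$ supplied by simple essential surrounding, and then to produce the required $\varepsilon$-coincidence from the universality of an essential map into $D$.

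Fix $\varepsilon>0$ and put $\lambda=\varepsilon/2$. Apply simple essential surrounding with parameter $\lambda$: we obtain an $n$-disc $D\subset\mathbb R^n$ surrounding $X$ (so we may assume $X\subset\wn D$) such that for every $\eta>0$ there are a map $\psi_\eta\colon D\to D$ homotopic to $\mathrm{id}_D$ by a homotopy of the pair $(D,\partial D)$ and a map $\phi_\eta\colon D\to X^\eta$ with $\phi_\eta(x)\stackrel{\lambda}{=}\psi_\eta(x)$ for all $x\in D$. Fix once and for all a homeomorphism $\theta\colon D^n\to D$ carrying $\partial D^n$ onto $\partial D$, and choose $\delta>0$ so small that $X^\delta\subset D$ (possible since the compactum $X$ lies in the open set $\wn D$); this $\delta$ depends only on $\varepsilon$, as the definition of close $n$-pith requires. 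Given $\eta>0$, I declare the pith to be $p:=\phi_\eta\circ\theta\colon D^n\to X^\eta$.

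It remains to check the coincidence property. Let $a\colon Z\to D^n$ be essential and $b\colon Z\to X^\delta$ arbitrary (these are the maps denoted $\phi,\psi$ in the statement). Since $\theta$ is a homeomorphism, $\theta\circ a\colon Z\to D$ is essential. Since $\psi_\eta$ is homotopic to $\mathrm{id}_D$ through maps of the pair $(D,\partial D)$, composing that homotopy with $\theta\circ a$ exhibits $e:=\psi_\eta\circ\theta\circ a\colon Z\to D$ as homotopic to $\theta\circ a$ through maps of pairs, without changing the preimage of $\partial D$; hence $e$ is again essential, and therefore universal. Feed to the universal map $e$ the map $m:=\iota\circ b\colon Z\to D$, where $\iota\colon X^\delta\hookrightarrow D$ is the inclusion (legitimate because $X^\delta\subset D$): there is a point $z_0\in Z$ with $e(z_0)=m(z_0)=b(z_0)$, i.e. $\psi_\eta(\theta(a(z_0)))=b(z_0)$. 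Therefore
\[
\rho(p(a(z_0)),b(z_0))=\rho(\phi_\eta(\theta(a(z_0))),\psi_\eta(\theta(a(z_0))))\le\lambda<\varepsilon ,
\]
that is $p(a(z_0))\stackrel{\varepsilon}{=}b(z_0)$, which is exactly the defining property of close $n$-pith. Hence $X$ has close $n$-pith.

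The one genuinely non-formal step is the assertion that $e=\psi_\eta\circ\theta\circ a$ is essential. Essentiality of maps into an $n$-disc is not a free-homotopy invariant, so it is crucial that $\psi_\eta$ be homotopic to the identity \emph{as a map of the pair} $(D,\partial D)$ — not merely uniformly close to the identity — and that the homotopy not enlarge the preimage of the boundary; equivalently, $\psi_\eta$ has degree one and so induces an isomorphism of $H^n(D,\partial D)$, hence cannot annihilate the class that detects essentiality of $\theta\circ a$. (If one worries that $\psi_\eta$ might send interior points of $D$ to $\partial D$, a harmless preliminary shrinking of $\psi_\eta$ near $\partial D$ arranges $\wn D\to\wn D$, which is all that is used.) This is precisely the place where the hypothesis is \emph{simple} essential surrounding rather than ordinary essential surrounding. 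Everything else — the bookkeeping of constants, the choice of $\delta$ with $X^\delta\subset D$, and the implication ``essential $\Rightarrow$ universal'' for maps into a disc (already invoked above for essentially surrounded continua) — is routine.
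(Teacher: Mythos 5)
Your proof is correct and follows essentially the same route as the paper's: the pith is $\phi_\eta$ (composed with a parametrization of $D$), essentiality of $\psi_\eta\circ(\text{essential map})$ gives universality, and the coincidence point yields the $\varepsilon$-estimate. The only cosmetic difference is that you bring $b(Z)\subset X^\delta$ into $D$ by choosing $\delta$ with $X^\delta\subset D$ and using the inclusion, whereas the paper retracts $D^\delta$ onto $D$ by a small shift (hence its extra $\varepsilon/2$ loss); both work.
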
\begin{proof} For a given number $\varepsilon>0$ take a disc $D$ such as in the definition  for $\varepsilon_1=\varepsilon/2$. Let $\delta>0$ be such that there exists a retraction $r:D^\delta\to D$ being an $\varepsilon_1$ shift. For a given $\eta>0$ consider a mapping $\phi:D\to X^\eta$ for which there is $\psi:D\to D$ homotopic to the  identity mapping. Now $\phi$ is an $\eta$-close to $X$ pith. Indeed, if $f:Z\to D$ is essential then the composition $\phi\circ f$ is essential by the Borsuk  Extension ofHomotopy Theorem, hence it is universal. Now, if $g:Z\to X^\delta\subset D^\delta$ then $r\circ g:Z\to D$ and from universality of $\phi\circ f$  there is $z\in Z$ such that $r(g(z))=\phi(f(z))\stackrel{\varepsilon _1}{=}\psi(f(z))$ and
  taking into account that $r(g(z))\stackrel{\varepsilon _1}{=}g(z)$ we have $g(z)\stackrel{\varepsilon}{=}\phi(f(z))$\end{proof}
  \begin{figure} \begin{centering}
\includegraphics[angle=0, width=3in ]{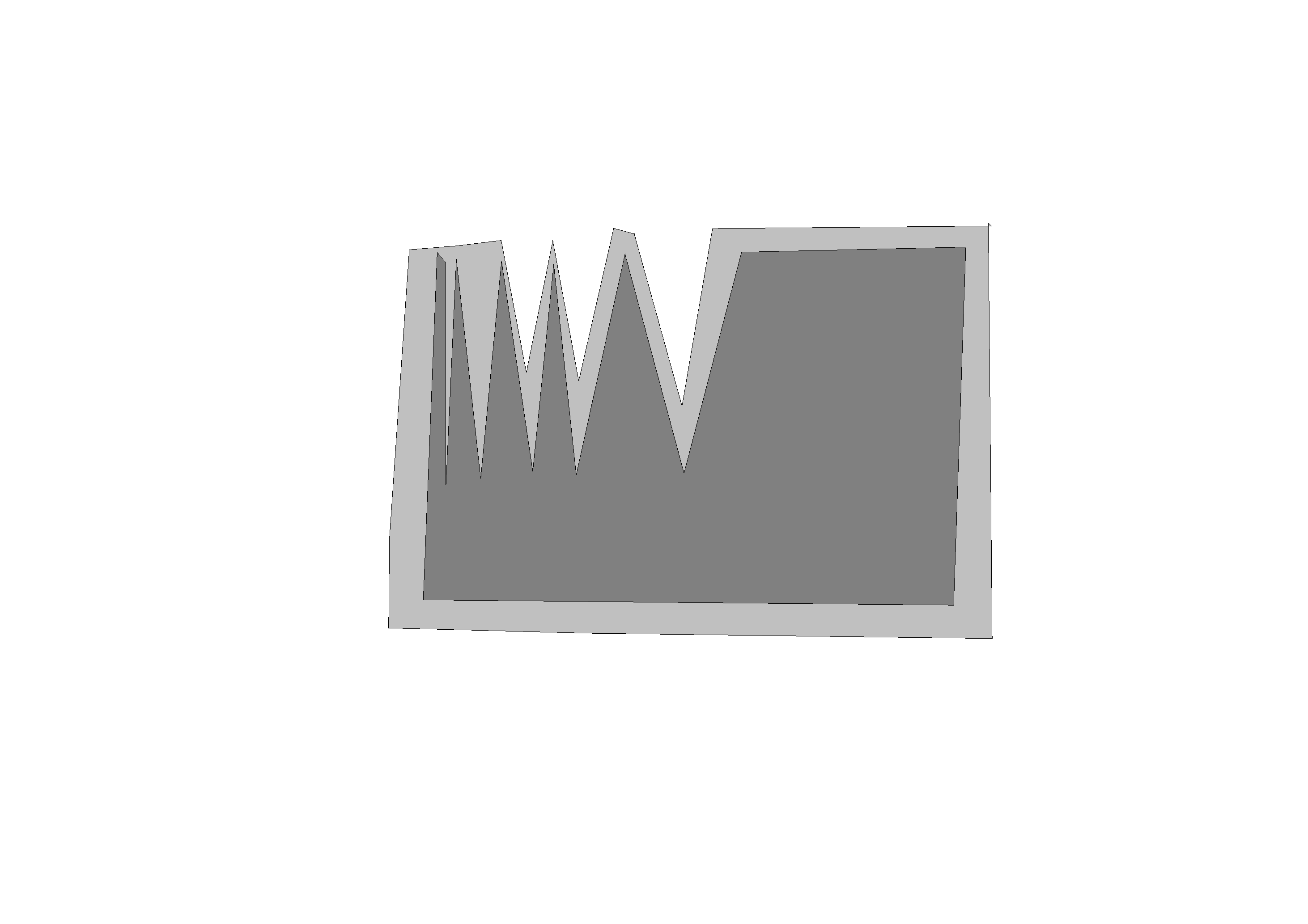}
\caption{Simply surrounded continuum}\label{surr}\end{centering}
\end{figure}
  
\begin{tw} Let $X$ be a continuum such that $X\times D^n$ has pfpp, and let $Y$ has close $n$-pith. Then $X\times Y$ has pfpp.\end{tw}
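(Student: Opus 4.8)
This is the analogue for $D^n$ and ``close $n$-pith'' of Theorem \ref{iloczyn}, and the plan is to run the same argument: the close $n$-pith of $Y$ takes over the role of Lemma \ref{span}, while Lemma \ref{essen}, applied with $D=D^n$, supplies an essential map onto $D^n$ defined on a compactum. Suppose, toward a contradiction, that $X\times Y$ does not have pfpp. By the theorem of Sanjurjo there is an $\varepsilon>0$ such that for every $\delta>0$ there is a mapping $f:X\times Y\to X^\delta\times Y^\delta$ moving every point more than $2\varepsilon$.

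First I would fix the constants. Apply Lemma \ref{essen} to the product $X\times D^n$ with this $\varepsilon$ to get $\delta_1>0$ with property **), and apply the definition of the close $n$-pith of $Y$ with the same $\varepsilon$ to get $\delta_2>0$. Put $\delta=\min(\delta_1,\delta_2)$ and take the mapping $f:X\times Y\to X^\delta\times Y^\delta$ furnished for this $\delta$. Since $X^\delta\times Y^\delta$ is an ANR, $f$ extends over a neighbourhood of $X\times Y$; hence for a sufficiently small $\eta>0$ there is an extension $\tilde f:X\times Y^\eta\to X^\delta\times Y^\delta$, and, shrinking $\eta$ if necessary, one may assume (by uniform continuity of the extension, since $f$ already moves every point of the compactum $X\times Y$ more than $2\varepsilon$) that $\tilde f$ still moves every point of $X\times Y^\eta$ more than $2\varepsilon$. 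Using the close $n$-pith of $Y$ with this $\eta$, pick a pith $p:D^n\to Y^\eta$.

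Next, set $g:X\times D^n\to X^\delta$ by $g(x,t)=\pi_{X^\delta}\bigl(\tilde f(x,p(t))\bigr)$, which makes sense since $p(t)\in Y^\eta$. By Lemma \ref{essen} (recall $\delta\le\delta_1$) the restriction $\phi:=\pi_{D^n}|Z$ of the projection to $Z:=\Gamma(g,\varepsilon)=\{(x,t)\in X\times D^n: g(x,t)\stackrel{\varepsilon}{=}x\}$ is essential, and $Z$ is a compactum. Define $\psi:Z\to Y^\delta$ by $\psi(x,t)=\pi_{Y^\delta}\bigl(\tilde f(x,p(t))\bigr)$. Applying the pith property to the essential $\phi$ and to $\psi$ (legitimate since $\delta\le\delta_2$) yields a point $(x,t)\in Z$ with $p(t)=p(\phi(x,t))\stackrel{\varepsilon}{=}\psi(x,t)$. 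Then the point $(x,p(t))\in X\times Y^\eta$ satisfies $\pi_{X^\delta}\tilde f(x,p(t))=g(x,t)\stackrel{\varepsilon}{=}x$ because $(x,t)\in\Gamma(g,\varepsilon)$, and $\pi_{Y^\delta}\tilde f(x,p(t))=\psi(x,t)\stackrel{\varepsilon}{=}p(t)$; hence $\tilde f(x,p(t))\stackrel{2\varepsilon}{=}(x,p(t))$, contradicting that $\tilde f$ moves all points more than $2\varepsilon$.

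The routine parts are the extension of $f$ to $\tilde f$ and the bookkeeping of $\varepsilon,\delta,\eta$; I do not expect trouble there, because Lemma \ref{essen} and the close $n$-pith dovetail exactly — the former outputs an essential map into $D^n$ on a compactum, the latter consumes precisely such data. The one point needing a little attention is the order of quantifiers: one must settle $\delta$ first (from Lemma \ref{essen} and the close $n$-pith simultaneously, which is possible since decreasing $\delta$ preserves both conditions), then choose $\eta$ from the extension step, and only afterwards invoke the close $n$-pith to obtain the pith $p$ landing in $Y^\eta$; composing with $\tilde f$ rather than with $f$ is what makes $p$ usable, since $p$ is only guaranteed to land in $Y^\eta$ and not in $Y$ itself.
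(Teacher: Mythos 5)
Your proof is correct and is exactly what the paper intends: the paper's own ``proof'' consists of the single remark that one replicates the argument of Theorem \ref{iloczyn}, and you have carried out that replication faithfully, with the pith $p:D^n\to Y^\eta$ playing the role of the arc $J$ and the pith property replacing Lemma \ref{span}. The quantifier bookkeeping (fixing $\delta$ from Lemma \ref{essen} and the pith definition before choosing $\eta$ and then $p$) is handled correctly, so nothing is missing.
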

\begin{proof} The argument goes by replicating the  proof of Theorem \ref{iloczyn}\end{proof}
As an immediate application we get
\begin{st}If a continuum  $Y \subset \mathbb R^n$ is simply essentially surrounded and $X\times D^n$ has pfpp then $X\times Y$ has pfpp too.\end{st}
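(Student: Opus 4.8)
The plan is to obtain the Claim as a direct composition of the two results immediately preceding it. First I would apply the Proposition stating that a simply essentially surrounded continuum $Y\subset\mathbb R^n$ has close $n$-pith; note that the integer $n$ coming from the embedding $Y\subset\mathbb R^n$ is exactly the one occurring in the phrase ``close $n$-pith.'' Then, since by hypothesis $X\times D^n$ has pfpp for this same $n$, the preceding Theorem (``$X\times D^n$ has pfpp and $Y$ has close $n$-pith imply $X\times Y$ has pfpp'') applies verbatim and yields that $X\times Y$ has pfpp. So the whole proof is two applications of already established statements, and all of the mathematical content is carried by those.

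If one prefers a self-contained argument rather than a citation, I would unwind the two proofs as follows. Given $\varepsilon>0$, the definition of \emph{simply essentially surrounded} furnishes an $n$-disc $D$ with $Y\subset D$ and, for every $\eta>0$, maps $\psi\colon D\to D$ homotopic to $\mathrm{id}_D$ rel $\partial D$ and $\phi\colon D\to Y^\eta$ with $\rho(\phi,\psi)\le\varepsilon$; as in the proof of the Proposition, $\phi$ is a pith, the universality of $\phi\circ f$ for essential $f$ being supplied by Borsuk's homotopy extension theorem together with a retraction $r\colon D^\delta\to D$ moving points little. Then I would replicate the argument of Theorem \ref{iloczyn} with $\mathbb I$ replaced by $D=D^n$: assume $X\times Y$ fails pfpp, fix a witnessing $\varepsilon$, extend a map $f\colon X\times Y\to X^\delta\times Y^\delta$ moving points more than $2\varepsilon$ to $\tilde f\colon X\times Y^\eta\to X^\delta\times Y^\delta$, restrict the first coordinate $\pi_{X^\delta}\circ\tilde f\circ(\mathrm{id}_X\times\phi)$ to $X\times D$, use Lemma \ref{essen} (now with the genuine $n$-disc $D$, legitimate precisely because $X\times D^n$ has pfpp) to produce a continuum $K$ inside the approximate-fixed-point set projecting onto $D$, and finally apply the pith/universality property to pin down a point with $\tilde f$-displacement at most $2\varepsilon$, contradicting the choice of $f$.

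I do not expect a genuine obstacle, which is why the statement is flagged as a Claim: both ingredients are already proved in the excerpt. The only points that demand a little care are the bookkeeping of the nested constants $\varepsilon,\delta,\eta$ --- in particular that in the definition of close $n$-pith the pith may be taken in an \emph{arbitrarily} small neighborhood $Y^\eta$, which is exactly what makes the extension $\tilde f$ of the near-fixed-point-free self-map possible --- and the observation that ``simply essentially surrounded'' delivers an honest $n$-dimensional disc, so that the hypothesis ``$X\times D^n$ has pfpp'' matches what Lemma \ref{essen} requires. Beyond that, the proof is the concatenation described above.
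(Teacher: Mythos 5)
Your proposal is correct and follows exactly the paper's route: the Claim is stated as an immediate consequence of the Proposition (simply essentially surrounded $\Rightarrow$ close $n$-pith) composed with the preceding Theorem ($X\times D^n$ has pfpp and $Y$ has close $n$-pith $\Rightarrow$ $X\times Y$ has pfpp). The additional unwinding you sketch matches the paper's own proofs of those two ingredients, so nothing further is needed.
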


\end{document}